\documentclass[12pt]{article}
\usepackage{amsmath, amssymb, amscd, amsfonts, tikz, tikz-cd, amsthm, undertilde, url}
\usepackage[left=1.0in, right=1.0in, top=1.0in, bottom=1.0in]{geometry}
\title{Ramification in the Inverse Galois Problem}
\author{Benjamin Pollak}
\date{}

\theoremstyle{plain}
\newtheorem{theorem}{Theorem}[section]
\theoremstyle{plain}
\newtheorem{corollary}[theorem]{Corollary}
\theoremstyle{definition}

\theoremstyle{definition}
\newtheorem{example}[theorem]{Example}
\theoremstyle{plain}
\newtheorem{lemma}[theorem]{Lemma}
\theoremstyle{remark}
\newtheorem{remark}[theorem]{Remark}
\theoremstyle{plain}
\newtheorem{proposition}[theorem]{Proposition}
\theoremstyle{plain}
\newtheorem{conjecture}[theorem]{Conjecture}

\begin{document}
\maketitle

\begin{abstract}
This paper focuses on a refinement of the inverse Galois problem. We explore what finite groups appear as the Galois group of an extension of the rational numbers in which only a predetermined set of primes may ramify. After presenting new results regarding extensions in which only a single finite prime ramifies, we move on to studying the more complex situation in which multiple primes from a finite set of arbitrary size may ramify. We then continue by examining a conjecture of Harbater that the minimal number of generators of the Galois group of a tame, Galois extension of the rational numbers is bounded above by the sum of a constant and the logarithm of the product of the ramified primes. We prove the validity of Harbater's conjecture in a number of cases, including the situation where we restrict our attention to finite groups containing a nilpotent subgroup of index $1,2,$ or $3$. We also derive some consequences that are implied by the truth of this conjecture. 
\end{abstract}

\section{Introduction}
This paper concentrates on how the set of ramified primes in a Galois extension of the rational numbers relates to the Galois group. For a square-free natural number $n \in \mathbb{N}$, define $U_n$ to be $\operatorname{Spec}\left(\mathbb{Z}\left[\frac{1}{n}\right]\right)$, an open subset of $\operatorname{Spec}\left(\mathbb{Z}\right)$. We denote the \'etale fundamental group by $\pi_1\left(U_n\right)$; it is the Galois group of the maximal extension of $\mathbb{Q}$ that is unramified at finite primes not dividing $n$. We then let $\pi_A(U_n)$ be the set of finite quotients of $\pi_1(U_n)$; it is the set of finite groups appearing as Galois groups of extensions of $\mathbb{Q}$ unramified at finite primes not dividing $n$. Finally, let $\pi_A^t(U_n)$ be the set of groups appearing when we restrict our attention to tame extensions. Our goal is to gain some insight into the contents of $\pi_A(U_n)$ and $\pi_A^t(U_n)$ for various choices of $n$. In studying the relationship between the ramification of Galois extensions and the Galois group, we obtain a refinement of the traditional inverse Galois problem which simply asks whether for every finite group $G$, does there exist an $n$ such that $G \in \pi_A(U_n)$.

For a finite group $G$, let
\[	d(G) = \min \left\lbrace \lvert S \rvert \; | S \text{ is a generating set for }G \right\rbrace. \]
In the function field case, a square-free polynomial $f \in \mathbb{F}_p[t]$ of degree $d$ has norm $p^d$. Let $U \subseteq \mathbb{A}^1_{\mathbb{F}_p}$ be the complement of the vanishing set of $f$ and $\pi_A^\text{t,reg}(U)$ be the finite groups appearing as Galois groups of tame, regular extensions of $\mathbb{F}_p(t)$ unramified outside of primes dividing $f$. Then, any $G \in \pi_A^\text{t,reg}(U)$ satisfies $d(G) \leq d = \log_p\left(\operatorname{Norm}(f)\right)$. Inspired by the analogy between number fields and function fields, in the arithmetic situation we view $U_n \subseteq \operatorname{Spec}\left(\mathbb{Z}\right)$ as the complement of the vanishing set of a square-free natural number $n$ that has norm $n$. In \cite{DH}, Harbater then proposes the following conjecture:
\begin{conjecture} \label{HConj}
	There is a constant $C$ such that for every square-free $n \in \mathbb{N}$, every $G \in \pi_A^t(U_n)$ satisfies $d(G) \leq \log\left(n\right) + C$.
\end{conjecture}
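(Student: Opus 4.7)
The starting point is the elementary observation that any $G \in \pi_A^t(U_n)$, realized as $\mathrm{Gal}(L/\mathbb{Q})$ for $L$ tame and unramified outside $n\cdot\infty$, is \emph{normally} generated by $\omega(n)+1$ elements: one generator of each tame (hence cyclic) inertia subgroup $I_p$ for $p\mid n$, together with complex conjugation. This follows because $\mathbb{Q}$ has no nontrivial unramified extension, by Minkowski's theorem. Combined with the elementary estimate
\[ \omega(n) \;\leq\; \log n + (1 - \log 2) \]
valid for squarefree $n \geq 2$ (since $\omega(n) - \log n = \sum_{p \mid n}(1 - \log p)$, which is maximized when $n = 2$), this would immediately yield the conjecture \emph{if} one could always upgrade normal generation to generation with the same number of elements.

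\textbf{Upgrading to generation in the solvable case.} For abelian $G$ the two notions coincide, and global class field theory identifies $G$ as a quotient of the tame ray class group $(\mathbb{Z}/n\mathbb{Z})^{\times}/\{\pm 1\}$, which has rank $\omega(n) + O(1)$, already yielding the conjecture. For nilpotent $G$ the Burnside basis theorem reduces $d(G)$ to the rank of the elementary abelian quotient $G/\Phi(G)$, which corresponds to an abelian subextension of $L/\mathbb{Q}$ with the same set of ramified primes and so falls under the abelian analysis. For $G$ containing a nilpotent normal subgroup $N$ of small index $k$, the bound $d(G) \leq d(G/N) + d(N)$ combined with a careful study of $L/L^N$---whose ramified primes lie above those dividing $n$ and whose tame abelian quotients are controlled by genus theory over $L^N$---should push the conclusion through when $k$ is small enough to absorb the $[L^N : \mathbb{Q}]$ factor in the norm. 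I expect this is the route the paper takes for $k \in \{1, 2, 3\}$.

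\textbf{Main obstacle.} The genuinely hard case is non-solvable $G$. For nonabelian simple composition factors, normal generation is vastly weaker than generation: $A_n$ is normally generated by a single $3$-cycle but $d(A_n) = 2$, and more elaborate extensions exhibit arbitrary gaps. While $d \leq 2$ holds for every finite simple group by a deep theorem relying on the classification, controlling $d$ for the nonsplit extensions assembling such composition factors into $G$ from purely normal-generation data seems to require genuinely new arithmetic input beyond the tame abelian class field theory sketched above. Bridging this gap---showing that the cyclic inertia generators at the $\omega(n) + 1$ ramified places actually suffice to \emph{generate} $G$, not merely to normally generate it---is what makes the full conjecture resist the elementary approach, and is why I expect the paper to settle for groups close to nilpotent rather than attacking arbitrary finite quotients of $\pi_1(U_n)$.
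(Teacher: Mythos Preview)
The statement you are addressing is a \emph{conjecture}; the paper does not prove it in general, only the special cases where $G$ is nilpotent (Proposition~\ref{HCTN}) or has a nilpotent subgroup of index $2$ or $3$ (Theorems~\ref{NI2} and~\ref{NI3}). Your proposal is explicitly not a proof but a strategy discussion, and it correctly predicts that the paper settles for groups close to nilpotent. So there is nothing to grade against a full proof, because none exists.

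Your sketch of the nilpotent case---reduce via the Burnside basis theorem to an elementary abelian quotient and then invoke Kronecker--Weber---matches the paper's Proposition~\ref{HCTN} essentially verbatim.

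For the index-$2$ and index-$3$ cases your outline is in the right spirit but misses the decisive analytic ingredient. You propose controlling the tame abelian quotients of the nilpotent part over the intermediate field $L^N$ by ``genus theory.'' The paper instead bounds $d(\mathrm{Gal}(K/F))$ through the rank of the ray class group of the quadratic or cubic field $F$, and the hard term there is $d(\mathrm{Cl}(F))$. Genus theory only controls the $2$-rank (or a fixed $p$-rank) of $\mathrm{Cl}(F)$ in terms of $\omega(d_F)$; to bound the \emph{full} rank by a constant times $\log\lvert d_F\rvert$ the paper invokes the Brauer--Siegel theorem (Lemmas~\ref{CNQ} and~\ref{AUC}), obtaining $\log h_F < (1+\epsilon)\log\lvert d_F\rvert^{1/2}$ and hence $d(\mathrm{Cl}(F)) \leq \log_2 h_F < c\,\log\lvert d_F\rvert$ with $c<1$. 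Without this input an intermediate field of small discriminant but large class-group rank would defeat the argument. Your starting inequality $d(G)\leq d(G/N)+d(N)$ and the paper's $d(G)\leq d(H)+1$ for $[G:H]\leq 3$ are equivalent reductions; the substance is in bounding $d(H)$, and that is where Brauer--Siegel, not genus theory, is what the paper actually uses.

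Your opening normal-generation heuristic (cyclic inertia plus complex conjugation, via Minkowski) is correct and illuminating, but as you yourself concede it does not upgrade to a bound on $d(G)$; the paper makes no use of it.
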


Since at least one finite prime ramifies in every extension of $\mathbb{Q}$, $\pi_A(U_1)$ consists of only the trivial group. Apart from this case, there is no other square-free $n$ for which $\pi_A(U_n)$ is completely understood. Nevertheless, we can obtain a partial description. Given a specific square-free $n$, the focus of Section 2 is to say as much as possible about extensions of $\mathbb{Q}$ unramified at finite primes not dividing $n$. We show in Theorem \ref{TRTLEQ53} that the Galois group of a tame, totally real, Galois extension ramified at a single prime of size at most $53$ is cyclic. We also show in Theorem \ref{GLT660} that if a group of order less than $660$ is the Galois group of an extension ramified at a single finite prime of size less than $37$, then the group must be solvable. Section $3$ is devoted to studying how generating sets of a Galois group relate to the ramified primes in the corresponding extension. In particular, we will prove in Proposition \ref{HCTN}, Theorem \ref{NI2}, and Theorem \ref{NI3} that if we restrict out attention to groups with a nilpotent subgroup of index $1,2,$ or $3$, then Conjecture \ref{HConj} is true. We conclude by examining some consequences that are implied by the truth of Conjecture \ref{HConj}.

\section{Galois Extensions of $\mathbb{Q}$ with Specified Ramification}

\subsection{Extensions Ramified at a Single Prime}
We will first focus on extensions of the rational numbers in which only one finite prime ramifies. We show in Theorem \ref{TRTLEQ53} that a tame, totally real, Galois extension ramified at a single finite prime of size at most $53$ is cyclic. In Corollary \ref{5SOLV} and Proposition \ref{5Arbitrary} we present results about extensions of $\mathbb{Q}$ in which $5$ is the only ramified finite prime. We then prove in Theorem \ref{GLT660} that Galois extensions of small degree that are ramified at a single, small finite prime are solvable.
\subsubsection{Totally Real Extensions}
Harbater proves in \cite{DH} that for $p < 23$ a prime number, the cyclotomic extension $\mathbb{Q}(\zeta_p)$ is the maximal extension of $\mathbb{Q}$ that is tamely ramified only at $p$ and $\infty$. We now present some analogous results in the totally real case in which the infinite place is also restricted from ramifying. For a square-free $n \in \mathbb{N}$, we let $\pi_1^{\operatorname{t,tr}}(U_n)^{\operatorname{solv}}$ denote the set of solvable groups that appear as the Galois group of some tame, totally real extension of $\mathbb{Q}$ in which only primes dividing $n$ may ramify.

\begin{proposition} \label{TRTSOLV}
	Let $p$ be an odd prime number. If $\mathbb{Q}(\zeta_p + \zeta_p^{-1})$ has class number $1$, then $\pi_1^{\operatorname{t,tr}}(U_p)^{\operatorname{solv}}$ is cyclic of order $\frac{p-1}{2}$. Hence, if $K/\mathbb{Q}$ is a totally real, tame, solvable extension only ramified at $p$, then $K \leq \mathbb{Q}(\zeta_p + \zeta_p^{-1})$.
\end{proposition}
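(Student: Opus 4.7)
The plan is to apply class field theory over $F = \mathbb{Q}(\zeta_p + \zeta_p^{-1})$ after first observing that $F/\mathbb{Q}$ is itself a tame, totally real, abelian extension ramified only at $p$, with cyclic Galois group of order $(p-1)/2$. (Indeed, $F$ is the fixed field of complex conjugation in $\mathbb{Q}(\zeta_p)$, and the unique prime $\mathfrak{p}$ of $F$ above $p$ is totally ramified of index $(p-1)/2$, which is coprime to $p$.) It therefore suffices to show that every tame, totally real, solvable Galois extension $L/\mathbb{Q}$ ramified only at $p$ is contained in $F$.

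To reduce to an abelian statement, I would pass to the compositum: $LF/F$ is tame, totally real, solvable, and ramified only at $\mathfrak{p}$. By solvability, if $LF \ne F$ then $LF/F$ would contain a nontrivial tame abelian totally real subextension of $F$ ramified only at $\mathfrak{p}$. Thus it is enough to show that $F$ admits no such nontrivial abelian extension. By class field theory the maximal tame abelian totally real extension of $F$ ramified only at $\mathfrak{p}$ corresponds to the ray class group $\text{Cl}_\mathfrak{p}(F)$ (with no archimedean part of the modulus, since total reality over $\mathbb{Q}$ forces every real place of $F$ to split completely). The standard exact sequence
\[
\mathcal{O}_F^* \xrightarrow{\phi} (\mathcal{O}_F/\mathfrak{p})^* \to \text{Cl}_\mathfrak{p}(F) \to \text{Cl}(F) \to 1,
\]
combined with the hypothesis $\text{Cl}(F) = 1$, reduces the proposition to surjectivity of $\phi$; since $\mathfrak{p}$ has residue field $\mathbb{F}_p$, the target is simply $\mathbb{F}_p^*$.

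The main obstacle is producing enough global units to surject onto $\mathbb{F}_p^*$, which I would handle via cyclotomic units. For each $2 \le a \le (p-1)/2$, the real cyclotomic unit $\xi_a = \zeta_p^{(1-a)/2}(1-\zeta_p^a)/(1-\zeta_p) \in \mathcal{O}_F^*$ (with the half-integral power interpreted inside $\mathbb{Q}(\zeta_{2p}) = \mathbb{Q}(\zeta_p)$) satisfies $\xi_a \equiv a \pmod{\mathfrak{p}}$, since $(1-\zeta_p^a)/(1-\zeta_p) = 1 + \zeta_p + \cdots + \zeta_p^{a-1} \equiv a$ and $\zeta_{2p} \equiv 1$ modulo the uniformizer $1-\zeta_p$. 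Combined with $\phi(-1) = -1$, the images of the $\xi_a$ exhaust $\mathbb{F}_p^*$, giving surjectivity of $\phi$, triviality of $\text{Cl}_\mathfrak{p}(F)$, and hence the proposition.
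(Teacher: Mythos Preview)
Your approach is correct and genuinely different from the paper's. The paper argues via the derived series of $G$: using Kronecker--Weber to place $K^{G^{(1)}}$ inside $F=\mathbb{Q}(\zeta_p+\zeta_p^{-1})$, it shows $K^{G^{(1)}}$ inherits class number $1$, so the abelian layer $K^{G^{(2)}}/K^{G^{(1)}}$ has no unramified subextension and hence is totally ramified at the unique prime above $p$; tameness then forces $K^{G^{(2)}}/\mathbb{Q}$ to be cyclic, giving $G^{(1)}=G^{(2)}$ and (by solvability) $G$ abelian. You instead pass to $F$ at the outset and compute the ray class group $\mathrm{Cl}_{\mathfrak{p}}(F)$ directly, killing it with cyclotomic units. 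The paper's argument is shorter and avoids any explicit unit computation, relying only on the principle ``tame and totally ramified implies cyclic''; your argument is more constructive and makes the class-field-theoretic mechanism over $F$ completely transparent, at the cost of needing the cyclotomic units.

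One small correction: with the primitive $2p$-th root interpretation you invoke, one has $\zeta_{2p}\equiv -1\pmod{1-\zeta_p}$, not $1$ (since $\zeta_{2p}^p=-1$ with $p$ odd forces the sign). This gives $\xi_a\equiv(-1)^{1-a}a$ rather than $a$, but since $-1\in\mathcal{O}_F^*$ already lies in the image of $\phi$, surjectivity onto $\mathbb{F}_p^*$ still follows and the argument goes through. Alternatively, interpret $\zeta_p^{1/2}$ as the \emph{integer} power $\zeta_p^{(p+1)/2}$ (legitimate since $2$ is invertible mod $p$); this is $\equiv 1\pmod{1-\zeta_p}$ and yields $\xi_a\equiv a$ on the nose.
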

\begin{proof}
	Let $p$ be a prime such that $\mathbb{Q}(\zeta_p + \zeta_p^{-1})$ has class number $1$. Suppose $G \in \pi_A^{\operatorname{t,tr}}(U_p)^{\operatorname{solv}}$. Let $K/\mathbb{Q}$ be an extension providing witness to the fact that $G \in \pi_A^{\operatorname{t,tr}}(U_p)^{\operatorname{solv}}$. Let $G^{(1)} = [G,G]$ and $G^{(2)} = [G^{(1)}, G^{(1)}]$ be the first and second commutator subgroups respectively. Letting $K^{G^{(1)}}$ and $K^{G^{(2)}}$ denote the fixed fields, we obtain the following diagram:
	$$\begin{tikzpicture}
	\matrix(m)[matrix of math nodes,
	row sep=3em, column sep=4.5em,
	text height=1.5ex, text depth=0.25ex]
	{ K	  \\
		K^{G^{(2)}} \\
		K^{G^{(1)}}	  \\
		\mathbb{Q} \\ };
	\path[-,font=\scriptsize]
	(m-1-1) edge node[auto] {} (m-2-1)
	(m-2-1) edge node[auto] {} (m-3-1)
	(m-3-1) edge node[auto] {} (m-4-1)
	;
	\end{tikzpicture}.$$
	Since $K^{G^{(1)}}$ is an abelian extension of $\mathbb{Q}$ that is totally real and tamely ramified only at $p$, by the Kronecker-Weber theorem $K^{G^{(1)}} \leq \mathbb{Q}(\zeta_p + \zeta_p^{-1})$ and $K^{G^{(1)}}/\mathbb{Q}$ is totally ramified. Note now that $K^{G^{(1)}}$ must have class number $1$. If not, it would have a nontrivial, unramified, abelian extension. However, taking the compositum of such an extension with $\mathbb{Q}(\zeta_p + \zeta_p^{-1})$ would then yield a nontrivial, unramified, abelian extension of $\mathbb{Q}(\zeta_p + \zeta_p^{-1})$, contradicting $\mathbb{Q}(\zeta_p + \zeta_p^{-1})$ having class number $1$. Thus, the abelian extension $K^{G^{(2)}}/K^{G^{(1)}}$ has no nontrivial, unramified subextensions, and so must be totally ramified. This implies that $K^{G^{(2)}}/\mathbb{Q}$ is totally ramified. By the tameness assumption, it must also be cyclic. Hence, $G/G^{(2)}$ is abelian, and so $G^{(1)} = G^{(2)}$. By assumption of $G$ being solvable, we conclude that $G^{(1)}$ must be trivial and $K^{G^{(1)}} = K$. Thus, $K \leq \mathbb{Q}(\zeta_p + \zeta_p^{-1})$ and $G$ is cyclic of order dividing $\frac{p-1}{2}$. 
\end{proof}

\begin{corollary} \label{TRTSOLV151}
	Suppose $p \leq 151$ is an odd prime. Then $\pi_1^{\operatorname{t,tr}}(U_p)^{\operatorname{solv}}$ is cyclic of order $\frac{p-1}{2}$; the maximal tame, totally real, solvable extension of $\mathbb{Q}$ ramified only at $p$ is  $\mathbb{Q}(\zeta_p + \zeta_p^{-1})$.
\end{corollary}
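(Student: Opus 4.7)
The plan is to simply invoke Proposition \ref{TRTSOLV} and reduce the corollary to a verification of the class-number hypothesis for the real cyclotomic fields $\mathbb{Q}(\zeta_p + \zeta_p^{-1})$ in the stated range. Concretely, Proposition \ref{TRTSOLV} says that whenever $h_p^+ := h\bigl(\mathbb{Q}(\zeta_p+\zeta_p^{-1})\bigr) = 1$, every solvable group in $\pi_A^{\operatorname{t,tr}}(U_p)$ is cyclic of order dividing $\tfrac{p-1}{2}$; conversely $\operatorname{Gal}\bigl(\mathbb{Q}(\zeta_p+\zeta_p^{-1})/\mathbb{Q}\bigr)$ itself is cyclic of order exactly $\tfrac{p-1}{2}$ and lies in $\pi_1^{\operatorname{t,tr}}(U_p)^{\operatorname{solv}}$, so the maximum is attained.

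The only thing to check is therefore that $h_p^+ = 1$ for every odd prime $p \leq 151$. This is a well-known computation: the relevant values of $h_p^+$ have been tabulated in the literature on cyclotomic fields (for instance in Washington's \emph{Introduction to Cyclotomic Fields}, where the relative class number $h_p^- = h_p / h_p^+$ is computed and $h_p^+$ is determined as a byproduct). In fact, it is known unconditionally that $h_p^+ = 1$ for all primes $p < 163$, and $151$ is the largest odd prime below this bound, which is precisely why the corollary stops where it does. I would therefore cite the standard tables (or refer the reader to the PARI/GP or Magma computation that reproduces them) for the statement that $h_p^+ = 1$ in this range.

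With that input in hand, the proof is a one-line application: for each odd prime $p \leq 151$, the hypothesis of Proposition \ref{TRTSOLV} is satisfied, so $\pi_1^{\operatorname{t,tr}}(U_p)^{\operatorname{solv}}$ is cyclic of order $\tfrac{p-1}{2}$, and the maximal tame, totally real, solvable extension of $\mathbb{Q}$ ramified only at $p$ is $\mathbb{Q}(\zeta_p + \zeta_p^{-1})$. There is no real obstacle here beyond accurately pinning down the class-number reference; the genuinely mathematical content is all in Proposition \ref{TRTSOLV}, and this corollary is essentially a packaging of that proposition with a known numerical fact.
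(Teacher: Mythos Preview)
Your approach is exactly the paper's: invoke Proposition \ref{TRTSOLV} and cite the fact that $h_p^+ = 1$ for odd primes $p \leq 151$. The only difference is the reference---the paper cites Miller's theorem (Theorem 1.1 in \cite{JM}) for the unconditional verification of $h_p^+ = 1$ in this range, which is sharper than pointing to Washington's tables (some of whose entries were only conditional when published); your remark about ``all primes $p < 163$'' slightly overstates what is known unconditionally, since $p = 157$ is not covered by Miller's unconditional result, and indeed $151$ is the cutoff precisely because that is where Miller's theorem stops.
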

\begin{proof}
	By Theorem 1.1 in \cite{JM}, the class number of $\mathbb{Q}(\zeta_p + \zeta_p^{-1})$ is $1$ for $p \leq 151$. Now apply Proposition \ref{TRTSOLV}.
\end{proof}

\begin{remark}
	If the class number of $\mathbb{Q}(\zeta_p + \zeta_p^{-1})$ is larger than $1$, then the Hilbert class field of  $\mathbb{Q}(\zeta_p + \zeta_p^{-1})$ shows that $\pi_1^{\operatorname{t,tr}}(U_p)^{\operatorname{solv}}$ is not cyclic.
\end{remark}

For $p\leq 53$ we can drop the solvable assumption in Proposition \ref{TRTSOLV}.
\begin{theorem} \label{TRTLEQ53}
	Suppose $p \leq 53$ is an odd prime. Then $\pi_1^{\operatorname{t,tr}}(U_p)$ is cyclic of order $\frac{p-1}{2}$ and $\mathbb{Q}(\zeta_p + \zeta_p^{-1})$ is the maximal totally real, tame extension of $\mathbb{Q}$ that is ramified only at $p$.
\end{theorem}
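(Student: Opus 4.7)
The plan is to reduce the theorem to the solvable case already handled by Proposition~\ref{TRTSOLV} and Corollary~\ref{TRTSOLV151}. Those results identify $\mathbb{Q}(\zeta_p + \zeta_p^{-1})$ as the maximal tame, totally real, \emph{solvable} extension of $\mathbb{Q}$ ramified only at $p$ for every odd $p \le 151$. Hence for $p \le 53$ it suffices to show that any $G \in \pi_A^{\operatorname{t,tr}}(U_p)$ must be solvable; that is, to rule out the existence of a tame, totally real, Galois extension $K/\mathbb{Q}$ ramified only at $p \le 53$ whose Galois group $G = \operatorname{Gal}(K/\mathbb{Q})$ is nonsolvable.

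The main tool I would use is a discriminant estimate. Tameness forces the different exponent at each prime $\mathfrak{p}$ above $p$ to equal $e_{\mathfrak{p}} - 1$, so $v_p(d_K) = \sum_{\mathfrak{p}|p} f_{\mathfrak{p}}(e_{\mathfrak{p}} - 1) \le n - 1$, where $n = [K:\mathbb{Q}]$. Since $K$ is unramified at all other finite primes and the archimedean places contribute nothing because $K$ is totally real, the root discriminant satisfies $|d_K|^{1/n} < p \le 53$. Odlyzko-type lower bounds on the root discriminant of a totally real field grow with $n$, so this inequality confines $n$ to a bounded range. Since every nonsolvable finite group has a nonabelian simple composition factor and therefore order at least $60$, the potentially problematic Galois groups form a finite, explicitly enumerable list.

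The final step is to eliminate this list. For each nonsolvable candidate $G$ of admissible order I would consult tables of totally real Galois number fields of degree $|G|$ ramified at a single rational prime (for instance via the LMFDB or the Jones-Roberts database) and verify that no such field realizes $G$ for any $p \le 53$. The main obstacle is the gap between the unconditional asymptotic Odlyzko constant ($\approx 22.3$) and the much stronger GRH constant ($\approx 60.8$): in degrees such as $n = 60, 120, 168$, the purely asymptotic bound may not by itself exclude a root discriminant below $53$, so one must appeal to the sharper finite-degree constants of Odlyzko-Poitou-Serre. The residual candidates (typically involving composition factors like $A_5$, $A_6$, and $\mathrm{PSL}_2(7)$) are then dispatched by a case-by-case analysis, using the strong restriction that $|d_K|$ must be a prime power $p^e$ with $p \le 53$ and that the ramification at $p$ is tame.
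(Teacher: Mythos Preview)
Your overall strategy---reduce to the solvable case via Corollary~\ref{TRTSOLV151} and then exclude nonsolvable groups by root-discriminant bounds---is exactly the right framework and matches the paper. The gap is in the ``final step.'' You propose to bound the degree once (root discriminant $<53$ forces $n<500$ by the Diaz y Diaz tables) and then eliminate every nonsolvable group of order below that bound by consulting field databases. That is not a proof: the relevant tables of totally real Galois fields with prescribed Galois group and prime-power discriminant are not known to be complete in this range, and ``the residual candidates are then dispatched by a case-by-case analysis'' is a promise rather than an argument.

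The paper closes this gap with an iterative refinement that you are missing. Take a \emph{minimal} nonsolvable counterexample $K$; then every proper quotient of $G=\operatorname{Gal}(K/\mathbb{Q})$ is solvable, hence abelian by Corollary~\ref{TRTSOLV151}. Lemma~2.5 of \cite{DH} then bounds the tame ramification index $e$ (essentially via the cyclic subgroup orders available in such a group), giving $e\le 14$. That sharpens the root discriminant to $53^{1-1/14}<40$, which in turn forces $n\le 84$ by the totally real Odlyzko tables. The only nonsolvable group of order $\le 84$ is $A_5$; for $A_5$ one has $e\le 5$, so the root discriminant is below $24$, contradicting the degree-$60$ totally real lower bound of about $36$. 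This bootstrapping---minimality $\Rightarrow$ abelian proper quotients $\Rightarrow$ bound on $e$ $\Rightarrow$ better discriminant bound $\Rightarrow$ smaller degree---is the key mechanism that makes the proof self-contained and avoids any appeal to field databases.
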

\begin{proof}
	It suffices to prove the claim that $\mathbb{Q}(\zeta_p + \zeta_p^{-1})$ is the maximal totally real, tame extension of $\mathbb{Q}$ that is ramified only at $p$. In doing so, we need only consider Galois extensions; a non-Galois counterexample would provide a Galois counterexample by taking the Galois closure. So, suppose for contradiction that the claim is false. Let $K$ be a Galois extension of $\mathbb{Q}$ of minimal degree that contradicts it. By Corollary \ref{TRTSOLV151}, $G = \operatorname{Gal}\left(K/\mathbb{Q}\right)$ is non-solvable. Let $e$ denote the ramification index of the primes above $p$. Since the extension is tame and $p \leq 53$, by \cite[Chapter 3, Section 6]{JPS} the root discriminant of $K/\mathbb{Q}$ is at most 
	\[ p^{1 + v_p(e) - \frac{1}{e}} = p^{1 + 0 - \frac{1}{e}}  = p^{1- \frac{1}{e}} \leq 53^{1 - \frac{1}{e}} < 53. \]
	By \cite{FDyD}, any totally real extension of $\mathbb{Q}$ of degree $500$ or larger has root discriminant bigger than $53$. Hence, $[K:\mathbb{Q}] < 500$. By the minimality of $[K:\mathbb{Q}]$, every proper quotient of $G$ must be solvable. By Corollary \ref{TRTSOLV151}, every proper quotient is therefore abelian. By Lemma 2.5 in $\cite{DH}$, we conclude that $e \leq 14$. Thus, the root discriminant is at most
	\[ 53^{1-\frac{1}{14}} < 40.	\]
	By \cite{FDyD} again, we now get $[K:\mathbb{Q}] \leq 84$. The only non-solvable group of order at most $84$ is $A_5$. Thus, $G \cong A_5$. Once more by Lemma 2.5 in \cite{DH}, $e \leq 5$ and so the root discriminant is at most
	\[	53^{1- \frac{1}{5}} < 24.	\]
	Finally, \cite{FDyD} tells us that the root discriminant must be at least $36$ for degree $60$ totally real extensions of $\mathbb{Q}$. This is a contradiction.
\end{proof}

\subsubsection{Extensions Ramified at a Small Prime}
We now focus on extensions of $\mathbb{Q}$ ramified at a single, small, integral prime. We begin by adapting a result of Hoelscher in \cite{JLH} to more suitably apply to our needs.
\begin{proposition} \label{SOSPSOLV}
	Suppose $K/\mathbb{Q}$ is a nontrivial, solvable Galois extension ramified only at a single, odd finite prime, $p$, and possibly $\infty$. Let $G = \operatorname{Gal}\left( K/\mathbb{Q} \right)$. Then, either $G$ is a cyclic $p$-group, $G/p(G)$ is isomorphic to a nontrivial subgroup of $\mathbb{Z}/(p-1)\mathbb{Z}$, or $G$ has a cyclic quotient of order $p^t$ where $\mathbb{Q}(\zeta_{p^{t+1}})$ is the first $p$-power cyclotomic field with nontrivial class group.
\end{proposition}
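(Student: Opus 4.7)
I will adopt the notation from \cite{JLH}, writing $p(G)$ for the subgroup of $G$ generated by elements of $p$-power order (equivalently, the smallest normal subgroup such that $G/p(G)$ is a $p'$-group), so that $G/p(G)$ is the maximal $p'$-quotient of $G$. My plan is to split on whether $p(G)$ is a proper subgroup of $G$ or all of $G$.

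Assume first that $p(G) \subsetneq G$, and set $L := K^{p(G)}$. Then $L/\mathbb{Q}$ is a nontrivial Galois extension whose group $H := G/p(G)$ is a solvable $p'$-group ramified only at $p$ and possibly $\infty$. Ramification at $p$ is automatically tame since $p \nmid \lvert H \rvert$, and any subextension of $L/\mathbb{Q}$ unramified at $p$ is everywhere unramified at finite primes and therefore trivial by Minkowski. Combined with the Kronecker--Weber theorem applied to the maximal abelian subextension of $L/\mathbb{Q}$, this forces $H/[H,H]$ to be cyclic of order dividing $p-1$. The local structure of the (cyclic, tame) inertia at $p$, together with solvability of $H$, then forces $H$ itself to be cyclic of order dividing $p-1$, placing us in case (b).

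Now assume $p(G) = G$, so $G$ has no nontrivial $p'$-quotient; equivalently, $G/[G,G]$ is a $p$-group. Kronecker--Weber shows $G/[G,G]$ is cyclic of order $p^s$ for some $s \geq 1$, and identifies $K^{[G,G]}$ with the unique degree-$p^s$ subfield $E \subseteq \mathbb{Q}(\zeta_{p^{s+1}})$. If $G$ is abelian then $G$ is already a cyclic $p$-group and we are in case (a). Otherwise $G$ is nonabelian, and I will show $s \geq t$, which places $G$ in case (c) via the order-$p^t$ quotient of its cyclic abelianization. For this, the second derived subgroup $G^{(2)}$ is a proper subgroup of $G^{(1)} = [G,G]$ because $G$ is solvable and nonabelian, so $K^{G^{(2)}}/E$ is a nontrivial abelian extension of $E$ ramified only above the unique prime of $E$ above $p$. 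Class field theory for $E$, combined with composition along the totally ramified tower $\mathbb{Q}(\zeta_{p^{s+1}})/E$, then produces a nontrivial everywhere-unramified abelian extension of $\mathbb{Q}(\zeta_{p^{s+1}})$, forcing a nontrivial class group of this cyclotomic field and hence $s \geq t$ by the minimality of $t$.

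The main obstacle will be the class-field-theoretic bookkeeping, especially in case (c): one must verify that the abelian extension of $E$ extracted from the derived series of $G$ genuinely pushes up to an \emph{unramified} abelian extension of $\mathbb{Q}(\zeta_{p^{s+1}})$, rather than merely a tamely ramified one, which requires disentangling the wild and tame contributions of the relevant ray class groups. Parallel care is needed in case (b) to upgrade from ``$H/[H,H]$ is cyclic'' to ``$H$ is cyclic,'' tracking conjugates of the inertia subgroup. Both of these technical points are precisely what the adaptation of \cite{JLH} handles, and I expect the proof to run by invoking the analogous arguments there with only minor modifications.
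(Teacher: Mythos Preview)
Your split on whether $p(G) = G$ is natural, but the first case contains a genuine error. You claim that when $H = G/p(G)$ is nontrivial, tame inertia plus solvability force $H$ itself to be cyclic of order dividing $p-1$. This is false. Take $p = 23$ and let $K$ be the Hilbert class field of $\mathbb{Q}(\zeta_{23})$ (class number $3$). Then $G = \operatorname{Gal}(K/\mathbb{Q})$ is nonabelian of order $66$, solvable, and ramified only at $23$ and $\infty$; since $23 \nmid 66$ we have $p(G) = 1$ and $G/p(G) = G$ is not cyclic. Your sketch correctly shows $H^{\mathrm{ab}}$ is cyclic of order dividing $p-1$, but nothing upgrades this to $H$ cyclic, and the results of \cite{JLH} you cite do not assert this either. (The proposition remains true in this example because $h(\mathbb{Q}(\zeta_{23})) > 1$ gives $t = 0$, so condition (c) is vacuous---but your argument does not land there.)

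The paper's proof avoids this via a different decomposition. It takes $K_0$ to be the maximal $p$-power Galois subextension, so $G/N \cong \mathbb{Z}/p^n\mathbb{Z}$ with $N = \operatorname{Gal}(K/K_0)$, and never splits on $p(G)$. Assuming neither (a) nor (b) holds for $G$, a Schur--Zassenhaus argument transfers this failure to $N$: one shows $N/p(N)$ is likewise not a nontrivial subgroup of $\mathbb{Z}/(p-1)\mathbb{Z}$. This is exactly the hypothesis under which Theorem~1.1 and Lemma~1.4 of \cite{JLH}, which are formulated for the tower $K/K_0/\mathbb{Q}$, produce a nontrivial unramified abelian extension of $K_0(\zeta_p) = \mathbb{Q}(\zeta_{p^{n+1}})$, giving $n \geq t$. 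Your derived-series route in the case $p(G) = G$ is closer in spirit, but the class-field-theoretic ``pushing up'' you defer is not what \cite{JLH} supplies (those lemmas are tied to the $K/K_0$ tower, not to $K^{G^{(2)}}/K^{G^{(1)}}$), so that step would need to be carried out directly rather than by citation.
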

\begin{proof}
	Let $K$ and $G$ satisfy the hypotheses above. Let $K_0/\mathbb{Q}$ be the maximal $p$-power, Galois sub-extension of $K/\mathbb{Q}$ and set $N = \operatorname{Gal}(K/K_0)$. By Theorem 2.11 in \cite{DH}, $N$ is cyclic. So, 
	\[ \text{Gal} \left( K_0 / \mathbb{Q} \right) \cong \mathbb{Z} / p^n \mathbb{Z} \text{ for some } n \geq 0. \]
	Furthermore, by Kronecker-Weber, $K_0$ is the cyclic sub-extension of degree $p^n$ in $\mathbb{Q}(\zeta_{p^{n+1}})$.
	$$\begin{tikzpicture}
	\matrix(m)[matrix of math nodes,
	row sep=3em, column sep=2.5em,
	text height=1.5ex, text depth=0.25ex]
	{ K & & \mathbb{Q}(\zeta_{p^{n+1}}) \\
		& K_0 & \\
		&\mathbb{Q} &  \\};
	\path[-,font=\scriptsize]
	(m-1-1) edge node[auto] {$N$} (m-2-2)
	(m-2-2) edge node[auto] {$G/N \cong \mathbb{Z} / p^n \mathbb{Z} $} (m-3-2)
	(m-1-3) edge node[auto] {} (m-2-2)
	;
	\end{tikzpicture}.$$
	Suppose now that $G$ is not a cyclic $p$-group and that $G/p(G)$ is not isomorphic to a nontrivial subgroup of $\mathbb{Z}/(p-1)\mathbb{Z}$. We must show that $\mathbb{Q}(\zeta_{p^{n+1}})$ has nontrivial class group; this then shows that $n \geq t$ and so $G$ has a cyclic quotient of order $p^t$. We first show that $N/p(N)$ is not isomorphic to a nontrivial subgroup of $\mathbb{Z}/(p-1)\mathbb{Z}$.
	
	Seeking a contradiction, suppose that $N/p(N)$ is isomorphic to a nontrivial subgroup of $\mathbb{Z}/(p-1)\mathbb{Z}$. Then,
	\[ N / p \left( N \right) \cong \mathbb{Z} / m \mathbb{Z} \text{ for some } m>1 \text{ dividing } p-1. \]
	Letting $F$ denote the fixed field of $K$ under $p(N)$, we obtain the following diagram:
	$$\begin{tikzpicture}
	\matrix(m)[matrix of math nodes,
	row sep=3em, column sep=2.5em,
	text height=1.5ex, text depth=0.25ex]
	{   & K \\
		F &  \\
		& K_0  \\
		&  \mathbb{Q} \\};
	\path[-,font=\scriptsize]
	(m-1-2) edge node[left] {$p \left( N \right)$} (m-2-1)
	(m-1-2) edge node[auto] {$N$} (m-3-2)
	(m-2-1) edge node[left] {$ N / p \left( N \right) \cong \mathbb{Z} / m \mathbb{Z}$} (m-3-2)
	(m-3-2) edge node[auto] {$G/N \cong \mathbb{Z} / p^n \mathbb{Z}$} (m-4-2)
	;
	\end{tikzpicture}.$$
	Since $N$ is normal in $G$ and $p \left( N \right)$ is characteristic in $N$, $p \left( N \right)$ is also normal in $G$. Hence, $F / \mathbb{Q}$ is a Galois extension with
	\[ \text{Gal} \left( F / \mathbb{Q} \right) \cong G / p \left( N \right).    \]
	Because $m \mid p-1$ and $\gcd(p-1,p) = 1$, the Schur-Zassenhaus Theorem tells us that 
	\[ \text{Gal} \left( F / \mathbb{Q} \right) \cong G / p \left( N \right) \cong \mathbb{Z}/ m\mathbb{Z} \rtimes  \mathbb{Z} / p^n \mathbb{Z}.	\]
	However, the automorphism group $\operatorname{Aut}(\mathbb{Z}/m\mathbb{Z})$ has order $\phi(m)$ which is prime to $p$. Thus, there are no nontrivial homomorphisms from $\mathbb{Z}/p^n\mathbb{Z}$ to $\operatorname{Aut}(\mathbb{Z}/m\mathbb{Z})$, and the above semidirect product is in fact a direct product. We conclude that
	\[ \text{Gal} \left( F / \mathbb{Q} \right) \cong G / p \left( N \right) \cong \mathbb{Z}/ m\mathbb{Z} \times  \mathbb{Z} / p^n \mathbb{Z}.	\]
	Noting that $ p \left( G \right) / p \left( N \right) \cong p \left( G/ p \left( N \right) \right) $ and applying the third isomorphism theorem, we obtain 
	\[ G / p \left( G \right) \cong \left( G / p \left( N \right) \right) / \left( p \left( G \right) / p \left( N \right) \right) \cong  \left( G / p \left( N \right) \right) /  p \left( G/ p \left( N \right) \right)  \]
	\[ \cong  \left( \mathbb{Z}/ m\mathbb{Z} \times  \mathbb{Z} / p^n \mathbb{Z} \right) / \left( \mathbb{Z} / p^n \mathbb{Z} \right) \cong \mathbb{Z} / m \mathbb{Z}.  \]
	This contradicts our assumption that $G / p \left( G \right)$ is not isomorphic to a nontrivial subgroup of $\mathbb{Z} / (p-1) \mathbb{Z}$. We conclude that $N/p(N)$ is not isomorphic to a nontrivial subgroup of $\mathbb{Z}/(p-1)\mathbb{Z}$.

	By Theorem 1.1 and Lemma 1.4 of \cite{JLH}, there is a nontrivial, abelian, unramified sub-extension $L / K_0 \left( \zeta_p \right)$ of $ K \left( \zeta_p \right) / K_0 \left( \zeta_p \right)$ of degree prime to $p$ with $L$ Galois over $\mathbb{Q}$:
	$$\begin{tikzpicture}
	\matrix(m)[matrix of math nodes,
	row sep=3em, column sep=4.5em,
	text height=1.5ex, text depth=0.25ex]
	{ 	& K \left( \zeta_p \right) \\
		K	& L \\
		& K_0 \left( \zeta_p \right) \\
		K_0 & \mathbb{Q}\left( \zeta_p \right) \\  
		\mathbb{Q} & \\ };
	\path[-,font=\scriptsize]
	(m-2-1) edge node[auto] {} (m-1-2)
	(m-2-2) edge node[auto] {} (m-1-2)
	(m-3-2) edge node[auto] {} (m-2-2)
	(m-4-1) edge node[auto] {$N$} (m-2-1)
	(m-4-1) edge node[auto] {} (m-3-2)
	(m-5-1) edge node[auto] {$ G/N \cong \mathbb{Z} / p^n \mathbb{Z} $} (m-4-1)
	(m-5-1) edge node[auto] {} (m-4-2)
	(m-4-2) edge node[auto] {} (m-3-2)
	;
	\end{tikzpicture}.$$
	Since $K_0 \leq \mathbb{Q}(\zeta_{p^{n+1}})$ and $[K_0:\mathbb{Q}] = p^n$, it must be the case that $K_0(\zeta_p) = \mathbb{Q}(\zeta_{p^{n+1}})$. Since $L$ is a nontrivial abelian unramified extension of $\mathbb{Q} \left( \zeta_{p^{n+1}} \right)$, the class number of $\mathbb{Q} \left( \zeta_{p^{n+1}} \right)$ is not $1$.
\end{proof}

\begin{corollary}
	Let $p < 23$ be an odd prime and let $K/\mathbb{Q}$ be a nontrivial, solvable Galois extension ramified only at $p$ and possibly $\infty$ with $G = \operatorname{Gal}(K/\mathbb{Q})$. One of the following holds:
	\begin{enumerate}
		\item $G/p(G)$ is a nontrivial subgroup of $\mathbb{Z}/(p-1)\mathbb{Z}$.
		\item $G$ has a cyclic quotient of order $p$.
	\end{enumerate}
\end{corollary}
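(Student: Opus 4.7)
My plan is to apply Proposition~\ref{SOSPSOLV} and collapse each of its three possible conclusions into one of the two options stated in the corollary. Recall that the proposition asserts $G$ is either (a) a cyclic $p$-group, (b) has $G/p(G)$ isomorphic to a nontrivial subgroup of $\mathbb{Z}/(p-1)\mathbb{Z}$, or (c) admits a cyclic quotient of order $p^t$ where $\mathbb{Q}(\zeta_{p^{t+1}})$ is the first $p$-power cyclotomic field with nontrivial class group.

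Case (b) is literally conclusion (1) of the corollary, so nothing further is needed there. Case (a) is essentially immediate: since $K/\mathbb{Q}$ is nontrivial by hypothesis, $G$ is a nontrivial cyclic $p$-group and so admits a cyclic quotient of order $p$, which is conclusion (2).

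The substantive step is case (c). Conclusion (2) will follow as soon as we know $t\geq 1$, since then the cyclic quotient of order $p^t$ surjects onto a cyclic group of order $p$. The inequality $t\geq 1$ is equivalent to $\mathbb{Q}(\zeta_p)$ having class number $1$, and this is precisely where the hypothesis $p<23$ is used: for each odd prime $p\in\{3,5,7,11,13,17,19\}$ the class number of $\mathbb{Q}(\zeta_p)$ is $1$ by classical tables, while the first odd prime with $h(\mathbb{Q}(\zeta_p))>1$ is $p=23$ (with class number $3$).

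The main (and really only) obstacle is invoking the correct cyclotomic class-number fact for $p<23$; everything else is just reading off the proposition's conclusions, so the corollary is bookkeeping on top of Proposition~\ref{SOSPSOLV}.
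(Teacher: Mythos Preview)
Your proof is correct and follows essentially the same approach as the paper's. The paper's proof is a single sentence—``Apply Proposition~\ref{SOSPSOLV} while noting that the $p^{\text{th}}$ cyclotomic field has class number $1$ for $p<23$ a prime''—and your argument is simply a more explicit unpacking of that sentence, spelling out how each of the three cases of the proposition collapses into one of the two conclusions and why the class-number fact forces $t\geq 1$.
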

\begin{proof}
	Apply Proposition \ref{SOSPSOLV} while noting that the $p^\text{th}$ cyclotomic field has class number $1$ for $p < 23$ a prime.
\end{proof}

In \cite{DH}, Harbater obtains results about extensions of the rational numbers in which the only finite prime that ramifies is $2$. In $\cite{JLH}$, Hoelscher studies extensions in which the only finite prime that ramifies is $3$. We now turn our attention to extensions in which the only finite prime that ramifies is $5$.
\begin{corollary} \label{5SOLV}
	Let $G$ be the Galois group of a nontrivial, solvable extension ramified only at $5$ and possibly $\infty$. One of the following holds:
	\begin{enumerate}
		\item $G$ is a cyclic $5$-group.
		\item $G / p \left( G \right) \cong \mathbb{Z} / 2\mathbb{Z}$.
		\item $G / p \left( G\right) \cong \mathbb{Z}/4 \mathbb{Z}$.
		\item $G$ has a cyclic quotient of order $25$.
	\end{enumerate} 
\end{corollary}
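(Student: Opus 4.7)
The plan is essentially a direct application of Proposition \ref{SOSPSOLV} with $p=5$, followed by a short case analysis of the three possible conclusions of that proposition. So the first step is to invoke Proposition \ref{SOSPSOLV}: a nontrivial solvable Galois extension of $\mathbb{Q}$ ramified only at $5$ and possibly $\infty$ must have Galois group $G$ that (a) is cyclic of $5$-power order, (b) has $G/p(G)$ isomorphic to a nontrivial subgroup of $\mathbb{Z}/4\mathbb{Z}$, or (c) has a cyclic quotient of order $5^t$, where $5^{t+1}$ is the smallest $5$-power for which $\mathbb{Q}(\zeta_{5^{t+1}})$ has nontrivial class group.

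Alternative (a) gives case (1) of the corollary verbatim. Alternative (b) splits according to the nontrivial subgroups of $\mathbb{Z}/4\mathbb{Z}$, which are exactly $\mathbb{Z}/2\mathbb{Z}$ and $\mathbb{Z}/4\mathbb{Z}$; these yield cases (2) and (3).

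The only remaining work is to identify the value of $t$ appearing in alternative (c). For this I would cite the known class numbers of the $5$-power cyclotomic fields: $\mathbb{Q}(\zeta_5)$ and $\mathbb{Q}(\zeta_{25})$ both have class number $1$, while $\mathbb{Q}(\zeta_{125})$ has class number greater than $1$ (the odd part is the same as that of the class number of the maximal real subfield in this range, both of which are tabulated e.g.\ in Washington's \emph{Introduction to Cyclotomic Fields}). Hence $5^{t+1}=125$, so $t=2$, which gives case (4).

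The main (and really the only) obstacle is the need for a reliable citation for the class number computations of $\mathbb{Q}(\zeta_{25})$ and $\mathbb{Q}(\zeta_{125})$, since the corollary's precise statement that the cyclic quotient has order exactly $25$ depends on knowing both that $\mathbb{Q}(\zeta_{25})$ has class number $1$ (so that $t\geq 2$) and that $\mathbb{Q}(\zeta_{125})$ has nontrivial class group (so that $t=2$ suffices). Everything else is a mechanical unpacking of Proposition \ref{SOSPSOLV} when $p-1=4$.
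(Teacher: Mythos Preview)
Your proposal is correct and takes essentially the same approach as the paper: the paper's proof is simply the two-sentence observation that $\mathbb{Q}(\zeta_{125})$ is the first $5$-power cyclotomic field with nontrivial class group, followed by an appeal to Proposition~\ref{SOSPSOLV}. Your write-up just unpacks this a bit more explicitly.
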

\begin{proof}
	The $125^\text{th}$ cyclotomic field is the first $5$-power cyclotomic field with nontrivial class group. Apply Proposition \ref{SOSPSOLV}.
\end{proof}

We now drop the solvable assumption and consider arbitrary extensions of $\mathbb{Q}$ in which only $5$ and $\infty$ may ramify.
\begin{proposition} \label{5Arbitrary}
	If $K/\mathbb{Q}$ is a nontrivial, Galois extension ramified only at $5$ and possibly $\infty$ with Galois group $G$, then one of the following holds:
	\begin{enumerate}
		\item $G \cong \mathbb{Z}/5\mathbb{Z}$.
		\item $G/p \left( G \right) \cong \mathbb{Z}/4 \mathbb{Z}$.
		\item $G/p \left( G \right) \cong \mathbb{Z} /2 \mathbb{Z}$.
		\item $e \equiv 0 \pmod 5$ and $e \geq 10$ , where $e$ is the ramification index of the primes above $5$.
	\end{enumerate}
\end{proposition}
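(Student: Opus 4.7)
The strategy is to combine Corollary~\ref{5SOLV} for the solvable case with a structural analysis of the prime-to-$5$ quotient $G/p(G)$ and the maximal solvable quotient for the non-solvable case, augmented by root-discriminant bounds.

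If $G$ is solvable, I would apply Corollary~\ref{5SOLV} directly. A cyclic $5$-group of order exactly $5$ gives case~1; a cyclic $5$-group of order at least $25$, or a cyclic quotient of order $25$, yields an abelian subextension of degree at least $25$ contained in some $\mathbb{Q}(\zeta_{5^n})$ by Kronecker--Weber and totally ramified at $5$, so $e \geq 25$, giving case~4. The remaining alternatives $G/p(G) \cong \mathbb{Z}/2\mathbb{Z}$ and $G/p(G) \cong \mathbb{Z}/4\mathbb{Z}$ give cases~3 and~2. If $G$ is non-solvable, I would apply Corollary~\ref{5SOLV} to the solvable quotient $G/R$, where $R$ is the terminal term of the derived series of $G$; any $\mathbb{Z}/25\mathbb{Z}$ quotient (or cyclic $5$-quotient of order at least $25$) of $G/R$ pulls back to a totally ramified abelian subextension of $K$ of degree at least $25$, yielding case~4. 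To handle the remaining cases where $(G/R)/p(\cdot) \cong \mathbb{Z}/2\mathbb{Z}$ or $\mathbb{Z}/4\mathbb{Z}$, I would first prove the structural lemma that $G/p(G)$ embeds in $\mathbb{Z}/4\mathbb{Z}$, equivalently that any Galois extension $L/\mathbb{Q}$ of prime-to-$5$ degree ramified only at $\{5,\infty\}$ lies in $\mathbb{Q}(\zeta_5)$. I would prove the lemma by compositum: $LM$ with $M = \mathbb{Q}(\zeta_5)$ gives $LM/M$ of prime-to-$5$ degree ramified only at the unique prime above $5$ in $M$, and using $h(M) = 1$ and that $(1+\sqrt{5})/2$ reduces to a generator of $\mathbb{F}_5^{\ast}$, a direct ray-class calculation kills the abelianization of $\mathrm{Gal}(LM/M)$; the bound $5^{1-1/(4e')} < 5$ on the root discriminant of $LM$, combined with Odlyzko-type lower bounds for totally complex fields, then rules out any nontrivial perfect quotient.

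The principal obstacle is the residual sub-case where $G = p(G)$ and $G/R$ is trivial or cyclic of order~$5$, so $G$ is either perfect or has abelianization exactly $\mathbb{Z}/5\mathbb{Z}$. Since the tame inertia quotient at $5$ is cyclic of order dividing~$4$ and the wild inertia is a $5$-group, the failure of case~4 restricts the ramification index to $e \in \{1,2,4,5\}$. The tame values $e \in \{1,2,4\}$ are eliminated using the root-discriminant bound $5^{1-1/e} < 5$ together with the minimum-discriminant estimates of \cite{FDyD}, which cap $[K:\mathbb{Q}] \leq 4$ and force $G$ abelian, contradicting non-solvability. The decisive case is $e = 5$, with root discriminant $5^{8/5} \approx 13.13$ (computed from the unique ramified $\mathbb{Z}/5\mathbb{Z}$-extension of $\mathbb{Q}_5$, whose different exponent is $8$). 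When $G^{\mathrm{ab}} = \mathbb{Z}/5\mathbb{Z}$, $K$ contains the totally real $\mathbb{Z}/5\mathbb{Z}$ subfield of $\mathbb{Q}(\zeta_{25})$, which forces $[K:\mathbb{Q}] \geq 300$; against this degree the unconditional Odlyzko lower bound already exceeds $13.13$, giving a contradiction. When $G$ is perfect, the smallest candidate is $G = A_5$ in degree $60$, and here the unconditional Odlyzko bound does not suffice; the hardest step is to invoke either the GRH-conditional Odlyzko bound or a dedicated nonexistence result for $A_5$-extensions of $\mathbb{Q}$ ramified only at $\{5,\infty\}$ to conclude that case~4 must hold.
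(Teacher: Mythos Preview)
Your solvable case matches the paper's argument exactly.

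Your non-solvable case, however, is far more elaborate than what the paper does, and the elaborate route leaves you with the acknowledged residual problem at the end. The paper's treatment of the non-solvable case is three lines: by Hoelscher's proposition in \cite{JLH}, any non-solvable $G \in \pi_A(U_5)$ has $|G| > 300$; by \cite{FDyD} the root discriminant then satisfies $|\Delta|^{1/n} \geq 19.2$; combining with the Serre upper bound $|\Delta|^{1/n} \leq 5^{1+v_5(e)-1/e}$ forces $v_5(e) \geq 1$, and $e=5$ is excluded because $5^{9/5} < 18.2 < 19.2$. That is the entire argument.

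The key idea you are missing is precisely this appeal to Hoelscher's degree bound. It instantly removes from consideration every perfect group of order below $300$, so the $A_5$ and $\mathrm{SL}(2,5)$ cases that you flag as ``the hardest step'' never arise. Your structural lemma on $G/p(G)$, the ray-class computation over $\mathbb{Q}(\zeta_5)$, the analysis of $G/R$, and the appeal to a dedicated $A_5$ nonexistence result or GRH are all rendered unnecessary by that single citation. Your approach can be completed (the $A_5$ nonexistence result you would need does exist, e.g.\ Theorem~4.1 of \cite{JR1P}), but it is a significant detour compared to the paper's direct route.

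One minor technical point: your root-discriminant value $5^{8/5}$ for $e=5$ is the exact value for the abelian $\mathbb{Z}/5\mathbb{Z}$-extension inside $\mathbb{Q}(\zeta_{25})$, but for an arbitrary $K$ with inertia of order $5$ you should use the general upper bound $5^{1+v_5(5)-1/5} = 5^{9/5}$, as the paper does. This does not affect your logic, since either value lies below the relevant Odlyzko threshold once $|G| > 300$.
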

\begin{proof}
	If $G$ is solvable, then one of the conditions in Corollary \ref{5SOLV} holds. If the second or third condition holds, then we are done. If the fourth condition holds, then, by Kronecker-Weber, the cyclic quotient of order $25$ produces a totally ramified sub-extension and so $25 \mid e$. Finally, if the first condition holds, either $G \cong \mathbb{Z}/5\mathbb{Z}$ and we are done, or $G \cong \mathbb{Z}/5^l\mathbb{Z}$ for some $l \geq 2$. Again by Kronecker-Weber, the $\mathbb{Z}/5^l\mathbb{Z}$ extension is totally ramified and so $25 \mid e$.

	Suppose now that $G$ is not solvable. Then, by the proposition in \cite{JLH}, $\lvert G \rvert > 300$. Let $n = \lvert G \rvert$ be the degree of the corresponding extension, and let $\Delta$ be the discriminant. Since the degree of the extension is at least 300, \cite{FDyD} tells us that $\lvert \Delta \rvert^{\frac{1}{n}} \geq 19.2$. Since $5$ is the only finite prime that ramifies, we have from \cite{JPS} that $\lvert \Delta \rvert^{\frac{1}{n}} \leq 5^{1 + v_5(e) - \frac{1}{e}}$. Thus,
	\[ 19.2 \leq 5^{1 + v_5(e) - \frac{1}{e}}. \]
	It must be the case that $v_5(e) > 0$, for otherwise the right hand side is at most $5$. Hence, $e \equiv 0 \pmod 5$. If $v_5(e) = 1$, then $e$ still cannot be $5$; if it were, the right hand side above is at most $18.12$. Thus, $e \geq 10$.
\end{proof}

\begin{remark}
	The fourth condition in Proposition \ref{5Arbitrary} can be replaced by $25 \mid e$ if one is willing to assume the generalized Riemann hypothesis. The proof showed that in the solvable case we can unconditionally replace the fourth condition with $25 \mid e$. By Theorem \ref{GLT660}, if $G$ is non-solvable we actually have $\left\vert{G}\right\vert \geq 660$. Under assumption of the generalized Riemann hypothesis, we have from Table 1 in \cite{AOT} that the Odlyzko lower bound on the root discriminant for fields of degree at least $340$ is $25.09$. This forces $v_5(e) \geq 2$ and so $25 \mid e$. Furthermore, for a totally real extension of degree at least $300$, we get by \cite{FDyD} that the root discriminant is at least $50$, and so we can unconditionally replace the fourth condition with $25 \mid e$ in the totally real case. Also note that by Table 2 in \cite{AOT}, once the extension has degree $10^7$ or more, the root discriminant is at least $22.3$, and so we must have that $e \geq 15$ in this scenario since the inequality $19.2 \leq 5^{1 + v_5(e) - \frac{1}{e}}$ becomes $22.3 \leq 5^{1 + v_5(e) - \frac{1}{e}}$.
\end{remark}

\subsubsection{Non-solvable Extensions}
Harbater showed in \cite{DH} that if $G \in \pi_A(U_2)$ and $\lvert G \rvert \leq 300$, then $G$ is solvable. In \cite{JLH}, Hoelscher strengthened this result and proved that if $2 \leq p < 23$ is prime and $G \in \pi_A(U_p)$ with $\lvert G \rvert \leq 300$, then $G$ is solvable. In this section we make further improvements and obtain the following:
\begin{theorem} \label{GLT660}
	If $2 \leq p < 37$ is a prime number and $G \in \pi_A(U_p)$ with $\lvert G \rvert < 660$, then $G$ is solvable.
\end{theorem}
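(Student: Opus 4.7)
The plan mirrors the proof of Theorem \ref{TRTLEQ53}, via a minimal counter-example. Suppose, for contradiction, that some non-solvable $G$ with $|G| < 660$ lies in $\pi_A(U_p)$ for some prime $2 \leq p < 37$; choose among all such pairs $(G,p)$ one with $|G|$ minimal, and let $K/\mathbb{Q}$ be a Galois extension realizing $G$ with only $p$ (and possibly $\infty$) ramified. By minimality, no proper quotient of $G$ can itself be a non-solvable element of $\pi_A(U_p)$, so every proper quotient of $G$ is solvable. For $p < 23$, the theorem of Hoelscher cited above then forces every proper quotient to have order at most $300$, and Proposition \ref{SOSPSOLV} constrains its structure; for $p = 23, 29, 31$, Proposition \ref{SOSPSOLV} applied directly gives a sufficiently restrictive list of proper quotients via the class-number data of $\mathbb{Q}(\zeta_{p^{t+1}})$.

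Next I would convert this structural restriction on proper quotients of $G$ into a bound on the ramification index $e$ of a prime of $K$ above $p$. The inertia group at $p$ surjects onto the inertia group in any quotient, so Lemma 2.5 of \cite{DH} applied to the proper quotients of $G$ limits the size of $e$, and in particular bounds $v_p(e)$. The standard discriminant estimate from \cite[Chap. 3, \S 6]{JPS} then gives
\[
|\Delta_K|^{1/[K:\mathbb{Q}]} \leq p^{1 + v_p(e) - 1/e},
\]
and plugging in the bound on $e$ yields an explicit upper bound on the root discriminant of $K$.

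The non-abelian simple groups of order less than $660$ are $A_5$, $\operatorname{PSL}_2(7)$, $A_6$, and $\operatorname{PSL}_2(8)$, of orders $60$, $168$, $360$, and $504$ respectively. Since every proper quotient of $G$ is solvable, some composition factor of $G$ is one of these four simple groups, and a short group-theoretic analysis enumerates the possible $G$ together with lower bounds on $[K:\mathbb{Q}]$. Comparing these degree bounds with the Odlyzko-type root discriminant lower bounds from \cite{FDyD} against the upper bound above should yield a contradiction in each of the finitely many remaining cases.

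The main obstacle will be the cases with $p$ close to $37$ (in particular $p = 31$) and $G$ an extension of $A_5$ by a small solvable group: here the upper bound $p^{1 + v_p(e) - 1/e}$ is largest, the Odlyzko bounds in \cite{FDyD} are weakest because $[K:\mathbb{Q}]$ can be as small as $60$ times a modest factor, and Proposition \ref{SOSPSOLV} leaves more room in the quotients because $\mathbb{Q}(\zeta_p)$ no longer has class number $1$. Squeezing a contradiction out of these borderline cases will likely require a finer analysis of which semidirect extensions of $A_5$ actually survive the ramification constraints, possibly together with a sharper control of $v_p(e)$ coming from the fact that the wild inertia subgroup is a normal $p$-subgroup of $G$ and hence lies in the solvable radical.
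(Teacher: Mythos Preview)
Your minimal-counterexample reduction is sound and is essentially what the paper does (first for $|G|\le 300$ in Proposition~\ref{GLEQ300}, then order-by-order up to $660$): once you know every proper quotient of $G$ is solvable, the unique minimal normal subgroup $N$ is non-abelian simple with $|G/N|\le 10$, and $C_G(N)=1$ forces $G\hookrightarrow\operatorname{Aut}(N)$, leaving exactly the list $A_5,\,S_5,\,\operatorname{PSL}(2,7),\,\operatorname{PGL}(2,7),\,A_6,\,\operatorname{PSL}(2,8)$. So far so good.

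The gap is in your endgame. Root-discriminant bounds alone do \emph{not} eliminate the borderline cases at $p=31$. For $\operatorname{PSL}(2,7)$ one has $e\le 7$, giving $31^{6/7}\approx 18.98$ against an Odlyzko bound of $17.95$; for $\operatorname{PGL}(2,7)$, $e\le 8$ gives $31^{7/8}\approx 20.14$ against $19.47$; for $\operatorname{PSL}(2,8)$, $e\le 9$ gives $31^{8/9}\approx 21.2$ against $20.1$. No contradiction. Your proposed sharpening via $v_p(e)$ cannot help: since $31\nmid |G|$ in every one of these cases, the ramification is tame and $v_{31}(e)=0$ automatically. And the hard cases are not extensions of $A_5$ as you anticipate; $A_5$, $S_5$, and $A_6$ are handled in the paper by citing the classification results of Jones--Roberts \cite{JR1P}, not by discriminant bounds.

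What the paper actually uses to close the $p=31$ cases are two arithmetic tricks you do not mention. First, for $\operatorname{PSL}(2,7)$ and $\operatorname{PSL}(2,8)$, the tame inertia group embeds in the multiplicative group of the residue field, so $31^f\equiv 1\pmod e$; combining this with the constraint that $ef$ is the order of a (solvable) subgroup of $G$ rules out all possible $f$. Second, for $\operatorname{PGL}(2,7)$, the paper forms the compositum of $K$ with the Hilbert class field of $\mathbb{Q}(\sqrt{-31})$: this triples the degree while keeping $e\le 8$, and only then does the Odlyzko bound bite. Small primes for $\operatorname{PGL}(2,7)$ and $\operatorname{PSL}(2,8)$ are also dispatched by citing external octic/nonic classification results \cite{JUA2,LS,LS9}. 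Without these ingredients your sketch cannot be completed.
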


To prove this, we will first extend Hoelscher's result to hold for any prime $p < 37$. We will then systematically rule out the remaining non-solvable groups of order less than $660$ from being elements of $\pi_A(U_p)$ for all $p < 37$.

\begin{example} \label{S5A5PSL27}
	By Theorem 4.1 in \cite{JR1P}, if $p < 37$, then $S_5 \notin \pi_A(U_p)$ and $A_5 \notin \pi_A(U_p)$.
	
	We now show that $\operatorname{PSL}(2,7) \notin \pi_A(U_p)$ for $23 \leq p < 37$. Suppose for contradiction there is an extension $K/\mathbb{Q}$ with $\operatorname{Gal}(K/\mathbb{Q}) \cong \operatorname{PSL}(2,7)$ such that $23 \leq p < 37$ is the only ramified finite prime. This group has order $168 = 2^3 \cdot 3 \cdot 7$. Hence, the ramification in $K/\mathbb{Q}$ must be tame as $p \nmid 168$. Thus, the inertia group for any prime lying over $p$ must be cyclic. $\operatorname{PSL}(2,7)$ has cyclic subgroups of orders $1,2,3,4,$ and $7$. Thus, the corresponding ramification indices satisfy $e \leq 7$ and the root discriminant satisfies
	\[	\lvert \Delta \rvert^{\frac{1}{168}} \leq p^{1 + v_p(e) - \frac{1}{e}}.	\]
	By \cite{FDyD}, we know that $\lvert \Delta \rvert^{\frac{1}{168}} \geq 17.95$ and so
	\[ 17.95 \leq p^{1 + v_p(e) - \frac{1}{e}} \leq p^{1 + 0 - \frac{1}{7}} = p^{\frac{6}{7}}.	\]
	Hence,
	\[	p \geq 17.95^{\frac{7}{6}} > 29,	\]
	and so $p = 31$.
	
	If $e \neq 7$, then $e \leq 4$. But then the root discriminant is at most $31^{1 + 0 - \frac{1}{4}} < 17.95$ which is a contradiction. So, $e = 7$. Because the ramification is tame, the inertia group for any prime embeds into the multiplicative group of the residue field. Letting $f$ denote the residue degree, we have $31^f \equiv 1 \pmod e$. Furthermore, if $r$ is the number of primes that $p$ splits into, we know $ref = 168$ and so $rf = 24$. From $31^f \equiv 1 \pmod e$ and $f \mid 24$, we conclude that $f \in \{6, 12, 24\}$. Note now that $ef$ is equal to the order of the decomposition groups which are subgroups of $\operatorname{PSL}(2,7)$. Since $\operatorname{PSL}(2,7)$ has neither a subgroup of order $42$ nor a subgroup of order $84$, we conclude that $f = 24$ and the decomposition group has order $168$. This means that the decomposition groups are $\operatorname{PSL}(2,7)$. This is a contradiction because any decomposition group must be solvable, whereas $\operatorname{PSL}(2,7)$ is not. 
\end{example}

We can now extend Hoelscher's result to include all primes less than $37$:
\begin{proposition} \label{GLEQ300}
	If $2 \leq p < 37$ is a prime number and $G \in \pi_A(U_p)$ and $\lvert G \rvert \leq 300$, then $G$ is solvable.
\end{proposition}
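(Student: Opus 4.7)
The plan is to leverage the fact that $\pi_A(U_p)$ is closed under taking quotients. If $K/\mathbb{Q}$ is Galois with group $G$ ramified only at $p$ and $\infty$, and $N \triangleleft G$, then the fixed field $K^N/\mathbb{Q}$ is Galois with group $G/N$ and remains ramified only at $p$ and $\infty$. Combined with the already-established facts that $A_5, S_5 \notin \pi_A(U_p)$ for $p < 37$ (Theorem 4.1 of \cite{JR1P}) and $\operatorname{PSL}(2,7) \notin \pi_A(U_p)$ for $p < 37$ (Hoelscher in \cite{JLH} for $p < 23$ and Example \ref{S5A5PSL27} for $23 \leq p < 37$), the task reduces to showing that every non-solvable group $G$ with $|G| \leq 300$ admits a quotient isomorphic to one of $A_5$, $S_5$, or $\operatorname{PSL}(2,7)$.

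For this reduction, I would consider the solvable radical $R(G)$ and set $H = G/R(G)$. Since $G$ is non-solvable, $H$ is a non-trivial group with trivial solvable radical. In particular, every minimal normal subgroup of $H$ is a direct product of isomorphic non-abelian simple groups; since $|H| \leq 300 < 60^2$, the socle $\operatorname{soc}(H)$ must consist of a single non-abelian simple group $S$. The only non-abelian simple groups of order at most $300$ are $A_5$ and $\operatorname{PSL}(2,7)$, so $S$ is one of these two. Using the standard fact that $C_H(\operatorname{soc}(H)) = 1$ when $H$ has trivial solvable radical, $H$ embeds into $\operatorname{Aut}(S)$ while also containing $\operatorname{Inn}(S) \cong S$. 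If $S = A_5$, then $H$ sits between $A_5$ and $\operatorname{Aut}(A_5) = S_5$, forcing $H \in \{A_5, S_5\}$. If $S = \operatorname{PSL}(2,7)$, then $H$ sits between $\operatorname{PSL}(2,7)$ and $\operatorname{PGL}(2,7)$, a group of order $336$; the bound $|H| \leq |G| \leq 300$ then forces $H = \operatorname{PSL}(2,7)$. In each case $H$ provides the desired quotient.

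The main obstacle in this approach, extending the exclusion of $\operatorname{PSL}(2,7)$ to the new primes $p \in \{23, 29, 31\}$ outside Hoelscher's original range, has already been resolved in Example \ref{S5A5PSL27} via a root-discriminant estimate combined with an analysis of the decomposition groups at primes above $p$. The remainder of the argument is purely group-theoretic and requires no further arithmetic input.
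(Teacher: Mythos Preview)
Your argument is correct and follows the same overall strategy as the paper: close $\pi_A(U_p)$ under quotients, then show that any non-solvable $G$ of order at most $300$ forces one of $A_5$, $S_5$, or $\operatorname{PSL}(2,7)$ into $\pi_A(U_p)$, which is excluded by Example~\ref{S5A5PSL27} (together with Hoelscher's range $p<23$).

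The group-theoretic reduction differs in detail. The paper takes a minimal non-solvable counterexample $G$, observes that every nontrivial normal subgroup $N$ must be non-solvable (hence $|N|\ge 60$, so $|G/N|\le 5$ is abelian), and then invokes Lemma~2.5 of \cite{DH} to conclude that $G$ itself is isomorphic to $A_5$, $S_5$, or $\operatorname{PSL}(2,7)$. You instead pass to $H=G/R(G)$ and analyze its socle directly, using only that the non-abelian simple groups of order at most $300$ are $A_5$ and $\operatorname{PSL}(2,7)$ and the known outer automorphism groups. Your route is more self-contained (no appeal to the external lemma) and yields the slightly weaker but entirely sufficient conclusion that $G$ has one of the three groups as a \emph{quotient}; the paper's minimality device pins down $G$ itself but this extra precision is not needed for the proposition.
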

\begin{proof}
	We already know by the proposition in \cite{JLH} that the above statement holds for $2 \leq p < 23$. An analogous proof now works for $23 \leq p < 37$. That is, suppose for contradiction that $23 \leq p < 37$ and that there is a non-solvable $G \in \pi_A(U_p)$ with $\lvert G \rvert \leq 300$. Let $G$ be such a group with smallest possible order. If $N$ is any nontrivial, normal subgroup of $G$, then $G/N$ is also in $\pi_A(U_p)$. Since $G/N$ has smaller order than $G$, the minimality assumption on $G$ implies that $G/N$ is solvable. Since $G$ itself is not solvable, $N$ cannot be solvable. Thus, $\lvert N \rvert \geq 60$ and so $\lvert G/N \rvert \leq 5$ and $G/N$ is abelian. By Lemma 2.5 in \cite{DH}, $G$ is isomorphic to one of $S_5, A_5$, or $\operatorname{PSL}(2,7)$. This is impossible by Example \ref{S5A5PSL27}, and yields the desired contradiction.
\end{proof}

The following examples examine the remaining possible non-solvable groups of order less than $660$, and demonstrate that none of them appear in $\pi_A(U_p)$ for $p < 37$.

\begin{example} \label{G336}
	After $300$, the next non-solvable groups have order $336$. There are three such groups. Two of them have a normal subgroup isomorphic to $\mathbb{Z}/2\mathbb{Z}$. For each of them, the quotient by this group is a non-solvable group of order $168$. Since Proposition \ref{GLEQ300} says there are no non-solvable groups of order $168$ in $\pi_A(U_p)$ for $p < 37$, neither of these two groups can be in $\pi_A(U_p)$ for $p < 37$.
	
	The third group is isomorphic to $\operatorname{PGL}(2,7)$. Suppose there is a $K/\mathbb{Q}$ which realizes $\operatorname{PGL}(2,7)$ in $\pi_A(U_p)$. $\operatorname{PGL}(2,7)$ has a subgroup of order $42$. The fixed field for this subgroup would yield a non-Galois, degree $8$ extension of $\mathbb{Q}$. Since the normal subgroups in $\operatorname{PGL}(2,7)$ have indices $1,2,$ and $336$, the Galois closure of the degree $8$ sub-extension must be all of $K$. Thus, the largest power of $2$ dividing the Galois closure is $2^4 = 16$. By Corollary 2.3 in \cite{JUA2}, $p \neq 2$. If $p$ were $3$, the root discriminant would be at most $3^{1 + v_3(e) - \frac{1}{e}} \leq 3^{1+1-0} = 9$; but by \cite{FDyD}, the root discriminant is at least $19.47$. By Theorem 4.1 in \cite{LS}, $p \neq 7$.
	
	Primes larger than $7$ do not divide $336$, and so the extension must be tamely ramified. Therefore, the inertia group of any prime is cyclic. The cyclic subgroups of $\operatorname{PGL}(2,7)$ have orders $1, 2, 3, 4, 6, 7,$ and $8$. Thus, the ramification indices satisfy $e \leq 8$. This means the root discriminant is at most $p^{1+v_p(e) - \frac{1}{e}} \leq p^{1+0 - \frac{1}{8}} = p^{\frac{7}{8}}$. Since it is also at least $19.47$, we get $p \geq 19.47^{\frac{8}{7}} > 29$.
	 
	 Lastly, we consider $p = 31$. If $e \leq 7$, then the root discriminant is not large enough; so, $e=8$. The polynomial $x^6 + 2x^5 + 94x^4 + 126x^3 + 2947x^2 + 1736x + 30691$ generates an $S_3$-extension of $\mathbb{Q}$ in which $31$ is the only finite prime that ramifies; it is the Hilbert class field of $\mathbb{Q}(\sqrt{-31})$. Call this extension $H_{\mathbb{Q}(\sqrt{-31})}$. Since $\operatorname{PGL}(2,7)$ has no index $6$ normal subgroup, $K \cap H_{\mathbb{Q}(\sqrt{-31})} \neq H_{\mathbb{Q}(\sqrt{-31})}$. Thus, $K \cap H_{\mathbb{Q}(\sqrt{-31})} = \mathbb{Q}(\sqrt{-31})$. So, \[ [KH_{\mathbb{Q}(\sqrt{-31})}:\mathbb{Q}] = \frac{336 \cdot 6}{2} = 1008. \]
	 $\operatorname{Gal}\left(KH_{\mathbb{Q}(\sqrt{-31})}/\mathbb{Q}\right)$ is a subgroup of $\operatorname{PGL}(2,7) \times S_3$. Since the inertia groups in $\operatorname{PGL}(2,7)$ have order $8$ and the inertia groups in $S_3$ have order $2$, the ramification indices for $KH_{\mathbb{Q}(\sqrt{-31})}/\mathbb{Q}$ are at most $8$. This means that the root discriminant is at most $31^{\frac{7}{8}} < 20.2$. However, by \cite{FDyD}, the root discriminant is at least $20.9$ for degree $1008$ extensions. So, $\operatorname{PGL}(2,7) \notin \pi_A(U_{31})$.
\end{example}

\begin{example} \label{G360}
	The next possible order of a non-solvable group is $360$. There are $6$ such groups. Five of them have a normal subgroup isomorphic to $\mathbb{Z}/3\mathbb{Z}$. In each case, the quotient group is non-solvable of order $120$. But, by Proposition \ref{GLEQ300}, there are no non-solvable groups of order $120$ in $\pi_A(U_p)$ for $2 \leq p < 37$.
	
	The last remaining group is $A_6$. By Theorem 4.2 in \cite{JR1P}, $A_6 \notin \pi_A(U_p)$ for $2 \leq p < 37$.
\end{example}

\begin{example} \label{G420}
	The next candidate non-solvable group has order $420$. The only non-solvable group of order $420$ is $\mathbb{Z}/7\mathbb{Z} \times A_5$. The $\mathbb{Z}/7\mathbb{Z}$ factor forms a normal subgroup, and the quotient yields a non-solvable group of order $60$. But, Proposition \ref{GLEQ300} tells us there is no non-solvable group of order $60$ in $\pi_A(U_p)$ for $2 \leq p < 37$, and so the same is true of the unique non-solvable group of order $420$.
\end{example}

\begin{example} \label{G480}
	There are $26$ non-solvable groups of order $480$. Each of them has a normal subgroup isomorphic to $\mathbb{Z}/2\mathbb{Z}$. In each case, the quotient group is non-solvable of order $240$. Applying Proposition \ref{GLEQ300} now tells us that no such group appears in $\pi_A(U_p)$ for $2 \leq p < 37$.
\end{example}

\begin{example} \label{G504}
	There are two non-solvable groups of order $504$. One has a normal subgroup isomorphic to $\mathbb{Z}/3\mathbb{Z}$. The quotient group is non-solvable of order $168$, and so the group cannot appear in $\pi_A(U_p)$ for $2 \leq p < 37$ by Proposition \ref{GLEQ300}.
	
	The other group is the simple group $\operatorname{PSL}(2,8)$. Suppose $K \in \pi_A(U_p)$ with $\operatorname{Gal}(K/\mathbb{Q}) \cong \operatorname{PSL}(2,8)$. $\operatorname{PSL}(2,8)$ has a subgroup of order $56$, and the fixed field would yield a degree $9$ sub-extension of $\mathbb{Q}$. Because $\operatorname{PSL}(2,8)$ is simple, the Galois closure of this subfield is $K$. Corollary 4.2 in \cite{LS9} now tells us that $p \geq 11$. Since $7$ is the largest prime dividing $504$, the ramification is tame and so the inertia groups are cyclic. The largest size of a cyclic subgroup is $9$, and so $e \leq 9$. The root discriminant is at most $p^{1 + v_p(e) - \frac{1}{e}} \leq p^{\frac{8}{9}}$. By \cite{FDyD}, the root discriminant is at least $20.114$. Thus, $p^{\frac{8}{9}} \geq 20.114$ and so $p >29$.
	
	We now consider $p = 31$. If $ e \neq 9$, then $e \leq 7$ and the root discriminant is not large enough; thus, $e = 9$. Since $ef$ is the order of the decomposition groups, it must also be the order of some subgroup of $\operatorname{PSL}(2,8)$. Examining the possible orders of subgroups of $\operatorname{PSL}(2,8)$, we get that $f \in \{1,2,56\}$. If $f = 56$, the decomposition groups are all of $\operatorname{PSL}(2,8)$; this is impossible since the decomposition groups must be solvable. Thus, $f = 2$ or $f=1$. But, the inertia groups embed into the multiplicative groups of the residue fields, and so $31^f \equiv 1 \pmod e$. That is, $31 \equiv 1 \pmod 9$ or $31^2 \equiv 1 \pmod 9$. This is a contradiction, and so $\operatorname{PSL}(2,8) \notin \pi_A(U_{31})$.
\end{example}

\begin{example} \label{G540}
	There are two non-solvable groups of order $540$. Both have a normal subgroup isomorphic to $\mathbb{Z}/3\mathbb{Z}$. The quotient in both cases is a non-solvable group of order $180$. Proposition \ref{GLEQ300} now rules out either of these groups from appearing in $\pi_A(U_p)$ for $2 \leq p < 37$.
\end{example}

\begin{example} \label{G600}
	There are five non-solvable groups of order $600$. Each has a normal subgroup isomorphic to $\mathbb{Z}/5\mathbb{Z}$. The quotients are non-solvable of order $120$. Proposition \ref{GLEQ300} now shows that none of these groups appear in $\pi_A(U_p)$ for $2 \leq p < 37$.
\end{example}

We conclude by remarking that \ref{GLEQ300}, \ref{G336}, \ref{G360}, \ref{G420}, \ref{G480}, \ref{G504}, \ref{G540}, and \ref{G600}, along with the fact that the next smallest non-solvable group has order $660$, provide a proof for Theorem \ref{GLT660}.

\subsection{Extensions Ramified at Arbitrary Sets of Primes}
We now explore the situation in which more than a single finite prime is allowed to ramify. 
\begin{proposition} \label{relprimegroups} 
	Let $m \in \mathbb{N}_{>1}$ be a natural number and let $n \in \mathbb{N}_{>1}$ be a square-free natural number.
	\begin{enumerate}
		\item  If $\gcd\left(m,\prod_{p \mid n}p-1\right) = 1$, then no groups of order $m$ are in $\pi_A^t(U_n)$.
		\item If $\gcd\left(m,\prod_{p \mid n}p(p-1)\right) = 1$, then no groups of order $m$ are in $\pi_A(U_n)$.
	\end{enumerate}
\end{proposition}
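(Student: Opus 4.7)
My plan is to reduce both parts to the Kronecker--Weber theorem applied to the abelianization of $G$. For part (1), let $K/\mathbb{Q}$ realize $G \in \pi_A^t(U_n)$ of order $m$. The fixed field of $[G,G]$ is a tame abelian extension of $\mathbb{Q}$ ramified only at primes dividing $n$, and since $\mathbb{Q}(\zeta_{p^k})/\mathbb{Q}$ is wildly ramified at $p$ for $k \geq 2$, Kronecker--Weber confines this fixed field to $\mathbb{Q}(\zeta_n)$. Since $n$ is square-free, $\operatorname{Gal}(\mathbb{Q}(\zeta_n)/\mathbb{Q})$ has order $\phi(n) = \prod_{p \mid n}(p-1)$, so $\lvert G^{\mathrm{ab}} \rvert$ divides both $m$ and $\prod(p-1)$. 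The gcd hypothesis then forces $G^{\mathrm{ab}} = 1$, making $G$ perfect.

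Next I would argue by induction on $\lvert G \rvert$ that $G$ is trivial. Take $G$ a counterexample of minimal order. Every proper quotient $G/N$ is the Galois group of the intermediate extension $K^N/\mathbb{Q}$, hence also lies in $\pi_A^t(U_n)$, and its order divides $m$ so the gcd hypothesis persists. By minimality $G/N$ is trivial whenever $N \neq 1$, so $G$ has no proper nontrivial normal subgroup and is simple. Being simple and perfect, it must be non-abelian simple, and Feit--Thompson forces $\lvert G \rvert$ to be even. Hence $2 \mid m$, which together with the gcd hypothesis makes $\prod(p-1)$ odd; for square-free $n > 1$ this happens only when $n = 2$.

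The main obstacle is therefore the residual case $n = 2$, where the gcd hypothesis is vacuous and the claim collapses to $\pi_A^t(U_2) = \{1\}$. I would settle this with elementary discriminant bounds. A tame Galois extension of $\mathbb{Q}$ ramified only at $2$ with ramification index $e$ above $2$ has root discriminant $2^{1 - 1/e} < 2$. Tameness forces $e$ to be odd, so if the extension is non-trivial then its degree is at least $3$; at that point Minkowski's bound on the root discriminant already exceeds $2$ (roughly $2.32$ at degree $3$, and growing rapidly in higher degrees). This contradicts the upper bound and completes part (1).

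Part (2) reduces to part (1). The condition $\gcd(m, p) = 1$ for every $p \mid n$ means $G$ has no element of order $p$; since the wild inertia at any prime of $K$ above $p$ is a $p$-subgroup of $G$, it must be trivial. Hence $K/\mathbb{Q}$ is tame at every prime dividing $n$, placing $G \in \pi_A^t(U_n)$, and part (1) applies via the weaker hypothesis $\gcd(m, \prod(p-1)) = 1$ that is implied by the stronger one.
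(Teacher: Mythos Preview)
Your proof is correct and uses the same three ingredients as the paper---Kronecker--Weber for the abelian layer, Feit--Thompson for parity, and a root-discriminant bound to dispose of $n=2$---but you assemble them in a different order. The paper first isolates $n=2$, then for $n$ with an odd prime factor observes directly that $\prod_{p\mid n}(p-1)$ is even, so the gcd hypothesis forces $m$ odd; Feit--Thompson then makes $G$ solvable, giving a nontrivial abelian quotient that Kronecker--Weber immediately rules out. You instead apply Kronecker--Weber first to get $G$ perfect, then invoke Feit--Thompson to force $\lvert G\rvert$ even, and only then land in the $n=2$ case. Note that your reduction to the simple case is unnecessary: Feit--Thompson already says a nontrivial perfect group has even order, with no minimality argument needed.

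For part (2) your reduction to part (1) via ``$\gcd(m,p)=1$ kills wild inertia, so the extension is already tame'' is cleaner than the paper's approach, which repeats the Kronecker--Weber argument with the larger cyclotomic field $\mathbb{Q}(\zeta_{n^{t+1}})$.
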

\begin{proof}
	Because of root discriminant bounds, there are no tame extensions of $\mathbb{Q}$ in which $2$ is the only finite prime that ramifies; so, both statements above hold when $n = 2$.
	
	Now let $n$ have at least one odd prime factor. Suppose for contradiction that $G \in \pi_A^t(U_n)$ is a group of order $m$ with $\gcd\left(m,\prod_{p \mid n}p-1\right) = 1$. This means that $m$ is odd, and by Feit-Thompson, $G$ is solvable. So, $G$ has a nontrivial, abelian quotient, $A$. Since $G \in \pi_A^t(U_n)$, we also have that $A \in \pi_A^t(U_n)$. By Kronecker-Weber, $A$ is the Galois group of a sub-extension of $\mathbb{Q}(\zeta_n)/\mathbb{Q}$, and so the order of $A$ divides $\prod_{p \mid n}p-1$. Since also $\lvert A \rvert \mid \lvert G \rvert = m$, this contradicts $\gcd\left(m,\prod_{p \mid n}p-1\right) = 1$.
	
	Suppose now that $G \in \pi_A(U_n)$ is a group of order $m$ satisfying the hypothesis of $2$. Again, $m$ must be odd and an application of Feit-Thompson yields a nontrivial, abelian quotient, $A$. Kronecker-Weber tells us that $A$ is the Galois group of a sub-extension of $\mathbb{Q}(\zeta_{n^{t+1}})/\mathbb{Q}$ for some $t \in \mathbb{N}$. Thus, the order of $A$ divides $\prod_{p \mid n}p^t(p-1)$. Since also $\lvert A \rvert \mid \lvert G \rvert = m$, this contradicts $\gcd\left(m,\prod_{p \mid n}p(p-1)\right) = 1$.
\end{proof}

\begin{example}
	A Fermat prime is a prime number of the form $2^k + 1$ for some $k \in \mathbb{N}$. As a consequence of Proposition \ref{relprimegroups}, no group of odd order can be the Galois group of a tame extension of $\mathbb{Q}$ in which only Fermat primes ramify. 
\end{example}

We now show that if $n_1 \neq n_2$, then $\pi_A(U_{n_1}) \neq \pi_A(U_{n_2})$.
\begin{proposition}
	Let $n \in \mathbb{N}$ be square-free. Then, $\pi_A(U_n)$ determines $n$.
\end{proposition}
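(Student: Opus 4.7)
The plan is to recover $n$ from $\pi_A(U_n)$ by characterizing its prime divisors intrinsically. I would establish the following criterion: $p \mid n$ if and only if $\mathbb{Z}/p^k\mathbb{Z} \in \pi_A(U_n)$ for every $k \geq 1$. Since $n$ is square-free, its prime support determines it, so this criterion suffices to conclude that $\pi_A(U_{n_1}) = \pi_A(U_{n_2})$ forces $n_1 = n_2$.

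The forward direction is immediate: if $p \mid n$, then for each $k \geq 1$ cyclotomic theory provides a cyclic degree-$p^k$ subextension of $\mathbb{Q}(\zeta_{p^{k+1}})/\mathbb{Q}$ (or of $\mathbb{Q}(\zeta_{2^{k+2}})/\mathbb{Q}$ when $p = 2$) ramified only at $p$, placing $\mathbb{Z}/p^k\mathbb{Z}$ in $\pi_A(U_p) \subseteq \pi_A(U_n)$.

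For the reverse direction, suppose $p \nmid n$ and let $L/\mathbb{Q}$ be any $\mathbb{Z}/p^k\mathbb{Z}$-extension unramified outside the primes of $n$. By Kronecker-Weber, $L$ embeds in $\mathbb{Q}(\zeta_m)$ for $m$ the conductor of $L$; the prime divisors of $m$ are exactly the finite primes ramifying in $L$, and therefore lie in the support of $n$, so $p \nmid m$. Decomposing
\[
(\mathbb{Z}/m\mathbb{Z})^\ast \cong \prod_{q \mid m}(\mathbb{Z}/q^{e_q}\mathbb{Z})^\ast,
\]
the $p$-part of the factor at an odd $q$ is cyclic of order $p^{v_p(q-1)}$, independent of $e_q$; the factor at $q = 2$ contributes no $p$-part for odd $p$, and the case $q = 2 = p$ is excluded by the assumption $p \nmid n$. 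Consequently the exponent of the $p$-primary part of $(\mathbb{Z}/m\mathbb{Z})^\ast$, which upper-bounds $k$, is at most $K := \max_{q \mid n} v_p(q-1)$, a quantity depending only on $n$ and $p$. Choosing $k > K$ therefore gives $\mathbb{Z}/p^k\mathbb{Z} \notin \pi_A(U_n)$, completing the characterization.

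The argument is essentially cyclotomic bookkeeping via Kronecker-Weber; the only potential subtlety is the $p = 2$ case, where $(\mathbb{Z}/2^e\mathbb{Z})^\ast$ is non-cyclic for $e \geq 3$. However, the hypothesis $p \nmid n$ with $p = 2$ forces every $q \mid m$ to be odd, so this non-cyclicity never enters the bound $K$ and the analysis proceeds uniformly.
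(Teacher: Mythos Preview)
Your proof is correct and follows essentially the same approach as the paper: both establish the criterion $p \mid n \iff \mathbb{Z}/p^k\mathbb{Z} \in \pi_A(U_n)$ for all $k$, using cyclotomic subextensions for the forward direction and Kronecker--Weber to bound the $p$-part of the relevant cyclotomic Galois group for the converse. Your version is in fact slightly more careful than the paper's, correctly handling the $p=2$ case in the forward direction (using $\mathbb{Q}(\zeta_{2^{k+2}})$) and bounding the \emph{exponent} rather than the order of the $p$-primary part in the converse, yielding the sharper bound $K = \max_{q \mid n} v_p(q-1)$.
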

\begin{proof}
	We show that $p \mid n$ if and only if  $\forall m \in \mathbb{N}, \mathbb{Z}/p^m\mathbb{Z} \in \pi_A(U_n)$. This shows that $\pi_A(U_n)$ determines the prime factors of $n$, which then determines $n$ since $n$ is square-free.
	
	So suppose $p \mid n$. Then, $\forall m \in \mathbb{N}, \mathbb{Q}(\zeta_{p^{m+1}})$ has a sub-extension $K_m/\mathbb{Q}$ with $\operatorname{Gal}(K_m/\mathbb{Q}) \cong \mathbb{Z}/p^m\mathbb{Z}$. Since $K_m \leq \mathbb{Q}(\zeta_{p^{m+1}})$, the only finite prime that ramifies in $K_m/\mathbb{Q}$ is $p$. Hence, $\mathbb{Z}/p^m\mathbb{Z} \in \pi_A(U_n)$.
	
	Now suppose that $\forall m \in \mathbb{N}, \mathbb{Z}/p^m\mathbb{Z} \in \pi_A(U_n)$. Write the prime factorization of $n$ as $n = q_1\dots q_k$ where each $q_i$ is prime. Our goal is to show that one of the $q_i$ is equal to $p$.  By Kronecker-Weber, each $\mathbb{Z}/p^m\mathbb{Z}$-extension is contained in some cyclotomic field. So, $\forall m \in \mathbb{N}, \exists t_m \in \mathbb{N}$ such that the extension providing witness to the fact that $\mathbb{Z}/p^m\mathbb{Z} \in \pi_A(U_n)$ is a sub-extension of $\mathbb{Q}(\zeta_{t_m})/\mathbb{Q}$. Furthermore, since only primes dividing $n$ may ramify, $t_m$ may be chosen so that its set of prime factors is contained in $\{q_1, \dots q_k\}$. That is, $t_m = q_1^{e_{m,1}} \dots q_k^{e_{m,k}}$ where each $e_{m,i}$ is a nonnegative integer. Notice now that $p^m$ divides $\phi(t_m) = [\mathbb{Q}(\zeta_{t_m}):\mathbb{Q}]$ since $\mathbb{Q}(\zeta_{t_m})/\mathbb{Q}$ has a sub-extension with Galois group isomorphic to $\mathbb{Z}/p^m\mathbb{Z}$ . However, $\phi(t_m) = q_1^{e_{m,1} - 1}(q_1-1) \dots q_k^{e_{m,k} - 1}(q_k-1)$. If $p$ were not equal to one of the $q_i$, the maximal power of $p$ dividing $\phi(t_m)$ would be the maximal power of $p$ dividing $(q_1-1)\dots(q_k-1)$. This expression is independent of $m$, and so we may choose an $m \in \mathbb{N}$ large enough so that $p^m$ does not divide it. Hence, $p$ must equal one of the $q_i$.
\end{proof}

The following shows that if we only consider tame extensions, we can no longer recover $n$ from $\pi_A^t(U_n)$.
\begin{proposition}
	$\pi_A^t(U_6) = \pi_A^t(U_2) \cup \pi_A^t(U_3) = \pi_A^t(U_3)$.
\end{proposition}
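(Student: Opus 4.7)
The plan. The second equality $\pi_A^t(U_2) \cup \pi_A^t(U_3) = \pi_A^t(U_3)$ follows from $\pi_A^t(U_2) = \{1\}$ (noted in the proof of Proposition \ref{relprimegroups} via the standard root discriminant bound on tame extensions ramified only at $2$) together with $1 \in \pi_A^t(U_3)$. The inclusion $\pi_A^t(U_3) \subseteq \pi_A^t(U_6)$ is trivial, so the substance is to prove $\pi_A^t(U_6) \subseteq \pi_A^t(U_3)$; in fact I would show $\pi_A^t(U_6) = \{1, \mathbb{Z}/2\mathbb{Z}\}$, both realized in $\pi_A^t(U_3)$ by $\mathbb{Q}$ and $\mathbb{Q}(\zeta_3)$ respectively.

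Let $K/\mathbb{Q}$ be a tame Galois extension with group $G$ ramified only at $\{2, 3\}$, and analyze the derived series $G \supseteq G^{(1)} \supseteq G^{(2)}$. By Kronecker-Weber the maximal abelian subextension $K^{G^{(1)}}$ lies in some $\mathbb{Q}(\zeta_{2^a 3^b})$, with ramification indices at $2$ and $3$ dividing $\phi(2^a) = 2^{a-1}$ and $\phi(3^b) = 2\cdot 3^{b-1}$ respectively; the tame hypothesis requires these indices to be coprime to $2$ and $3$, forcing $e_2 = 1$ and $e_3 \mid 2$, and placing $K^{G^{(1)}}$ inside $\mathbb{Q}(\zeta_3)$. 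The next derived layer $K^{G^{(2)}}/K^{G^{(1)}}$ is a tame abelian extension ramified only above $\{2, 3\}$, and when $K^{G^{(1)}} = \mathbb{Q}(\zeta_3)$ I would invoke the following class field theory computation: $\mathbb{Q}(\zeta_3)$ has class number one and is totally complex (so infinite places contribute nothing to ray class groups), and the unit group $\mathbb{Z}[\zeta_3]^* = \langle -\zeta_3 \rangle$ surjects onto $(\mathcal{O}/\mathfrak{p}_2)^* \cong \mathbb{F}_4^*$, onto $(\mathcal{O}/\mathfrak{p}_3)^* \cong \mathbb{F}_3^*$, and onto their product, because $-\zeta_3 \bmod \mathfrak{p}_2 = \zeta_3$ has order $3$ (using $-1 \equiv 1$ in characteristic $2$) and $-\zeta_3 \bmod \mathfrak{p}_3 = -1$ has order $2$ (using $\zeta_3 \equiv 1 \pmod{\mathfrak{p}_3}$). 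Hence every tame ray class group of $\mathbb{Q}(\zeta_3)$ with modulus dividing $\mathfrak{p}_2 \mathfrak{p}_3$ is trivial, so $K^{G^{(2)}} = K^{G^{(1)}}$, giving $G^{(1)} = G^{(2)}$ and thus $G^{(1)}$ perfect.

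If $G$ is solvable then $G^{(1)} = 1$, so $G$ is abelian and embeds into $\operatorname{Gal}(\mathbb{Q}(\zeta_3)/\mathbb{Q}) \cong \mathbb{Z}/2\mathbb{Z}$, giving $G \in \{1, \mathbb{Z}/2\mathbb{Z}\}$. Otherwise $G^{(1)}$ is a nontrivial perfect group of order at least $60$, so $K/\mathbb{Q}$ has degree at least $60$ with root discriminant at most $2^{1-1/e_2} \cdot 3^{1-1/e_3} < 6$, contradicting the Odlyzko-type bounds in \cite{FDyD}, which give root discriminant above $6$ for any number field of degree at least $60$. The main obstacle is the class field theory step: the surjectivity of the unit image onto $(\mathcal{O}/\mathfrak{p}_2 \mathfrak{p}_3)^*$ rests on the arithmetic coincidences $-1 \equiv 1$ at primes above $2$ and $\zeta_3 \equiv 1 \pmod{\mathfrak{p}_3}$, which are exactly what allow tame ramification at $2$ to collapse into the ramification already available at $3$.
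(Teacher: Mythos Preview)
Your argument is correct, and it takes a genuinely different route from the paper's. The paper applies the root discriminant bound at the outset to force $[K:\mathbb{Q}]\le 9$, then eliminates each remaining degree by hand (ruling out $\mathbb{Z}/5\mathbb{Z}$, $\mathbb{Z}/6\mathbb{Z}$, $\mathbb{Z}/7\mathbb{Z}$ via Kronecker--Weber and finally $S_3$ via the ray class number of $\mathbb{Q}(\sqrt{-3})$ for the modulus $(2)$). You instead run a derived-series argument in the spirit of the paper's Proposition~\ref{TRTSOLV}: Kronecker--Weber pins down $K^{G^{(1)}}\subseteq\mathbb{Q}(\zeta_3)$, and your explicit computation that the global units $\langle -\zeta_3\rangle$ surject onto $(\mathcal{O}/\mathfrak{p}_2\mathfrak{p}_3)^\ast\cong\mathbb{F}_4^\ast\times\mathbb{F}_3^\ast$ kills the next layer, forcing $G^{(1)}$ perfect; the discriminant bound is invoked only to exclude the non-solvable case. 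Your approach is more structural and avoids the case enumeration, at the cost of a slightly larger ray class computation (modulus $\mathfrak{p}_2\mathfrak{p}_3$ rather than just $\mathfrak{p}_2$) and an appeal to the fact that nontrivial perfect groups have order at least $60$. One small presentational point: you treat only the case $K^{G^{(1)}}=\mathbb{Q}(\zeta_3)$ explicitly; the case $K^{G^{(1)}}=\mathbb{Q}$ (i.e.\ $G$ already perfect) is of course absorbed into your non-solvable endgame, but it would be worth saying so.
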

\begin{proof}
		First note that $\pi_A^t(U_2)$ consists only of the trivial group and so the final equality holds. Now consider $\pi_A^t(U_6)$. Any extension tamely ramified only at $2$ and $3$ has root discriminant at most $6$. By \cite{FDyD}, the degree of such an extension is at most $9$. So, let $G \in \pi_A^t(U_6)$. We will show it is in $\pi_A^t(U_3)$. Suppose not. Then $2$ must be ramified in the extension that realizes $G$. Since the extension is tame, its degree is not a power of $2$. Since $\pi_A^t(U_2)$ contains only the trivial group, $3$ must also be ramified. Since $3$ is tamely ramified, the degree is not $3$ or $9$. This leaves $5,6, \text{ and }7$ as possibilities for the degree. Any group of order $5$ or $7$ is cyclic, and by Kronecker-Weber we see that neither $\mathbb{Z}/5\mathbb{Z}$ nor $\mathbb{Z}/7\mathbb{Z}$ is in $\pi_A^t(U_6)$. Hence, the degree is $6$ and $G$ is $\mathbb{Z}/6\mathbb{Z}$ or $S_3$. Again by Kronecker-Weber, we can rule out $\mathbb{Z}/6\mathbb{Z}$. Thus, $G$ is $S_3$. $S_3$ has $A_3$ as an index $2$ subgroup. The fixed field for $A_3$ would be a quadratic extension of $\mathbb{Q}$. Since it is tamely ramified only at $2, 3$ and possibly $\infty$, it must be $\mathbb{Q}(\sqrt{-3})$. The $S_3$-extension of $\mathbb{Q}$ is degree $3$ over $\mathbb{Q}(\sqrt{-3})$, so is abelian over it. It is tamely ramified, so only $2$ can be ramified; $\infty$ cannot ramify as $\mathbb{Q}(\sqrt{-3})$ is already totally imaginary. So, the extension is in the ray class field for the modulus $(2)$. However, the ray class number for the modulus $(2)$ for $\mathbb{Q}(\sqrt{-3})$ is $1$, and so there is no degree $3$ abelian extension of it that is ramified only at $2$. This is a contradiction. Thus $\pi_A^t(U_6) = \pi_A^t(U_2) \cup \pi_A^t(U_3) = \pi_A^t(U_3)$. Note also that even the set of number fields tamely ramified only at $2$, $3$, and $\infty$ is just the set of number fields tamely ramified only at $3$ and $\infty$.
\end{proof}

\section{The Minimal Number of Generators of Galois Groups}
Harbater originally stated Conjecture \ref{HConj} in \cite{DH}. In this section we study how the minimal number of generators of a Galois group relates to the ramification in the corresponding extension, and prove the validity of Harbater's conjecture in some special situations.

\begin{remark}
	In the analogous statement in the function field case that inspired Conjecture \ref{HConj}, which originated from \cite[Chapter XIII, Corollary 2.12]{SGA1}, the base of the logarithm is the characteristic. In the arithmetic case, we make the natural choice of $e$ as the base instead. The addition of the constant is because in the function field case we may require one extra generator, the Frobenius, if we do not restrict our attention to regular extensions. Additionally, the analogous statement for curves of higher genus in the function field case would require a constant depending on the genus. If in the analogy between number fields and function fields the ``genus'' of $\mathbb{Q}$ is not $0$, then this would be accounted for by the extra constant in the conjecture.
\end{remark}

\subsection{The Nilpotent Case}
We first consider nilpotent extensions of $\mathbb{Q}$.
\begin{proposition} \label{HCTN}
	If $G \in \pi_A^t(U_n)$ is nilpotent, then $d(G) \leq \log(n)$.
\end{proposition}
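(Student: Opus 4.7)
The plan is to reduce to elementary abelian Sylow quotients via Burnside's basis theorem, push these into a cyclotomic field by Kronecker--Weber, and then bound their ranks by an elementary prime-counting inequality.

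First I would decompose $G = \prod_\ell G_\ell$ into its Sylow $\ell$-subgroups; since the factors have pairwise coprime order one has $d(G) = \max_\ell d(G_\ell)$, so it suffices to prove $d(G_\ell) \leq \log(n)$ for every prime $\ell$. Fix $\ell$. By Burnside's basis theorem, $d(G_\ell) = \dim_{\mathbb{F}_\ell}\bigl(G_\ell/\Phi(G_\ell)\bigr)$; let $V_\ell$ denote this elementary abelian $\ell$-quotient. The composition $G \twoheadrightarrow G_\ell \twoheadrightarrow V_\ell$ cuts out a sub-extension $L_\ell/\mathbb{Q}$ of the $K/\mathbb{Q}$ realizing $G$, with $\operatorname{Gal}(L_\ell/\mathbb{Q}) \cong V_\ell$, still tame and unramified outside primes dividing $n$.

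Next I would apply Kronecker--Weber. Because $L_\ell/\mathbb{Q}$ is abelian it lies in some cyclotomic field, and a routine check of the cyclotomic inertia structure (the wild inertia at $p$ in $\mathbb{Q}(\zeta_{p^a})$ is the kernel of $(\mathbb{Z}/p^a)^\times \twoheadrightarrow (\mathbb{Z}/p)^\times$) shows that tameness together with the square-freeness of $n$ force $L_\ell \subseteq \mathbb{Q}(\zeta_n)$. Thus $V_\ell$ is a quotient of $\operatorname{Gal}(\mathbb{Q}(\zeta_n)/\mathbb{Q}) \cong \prod_{p \mid n} \mathbb{Z}/(p-1)\mathbb{Z}$ killed by $\ell$, and so its $\mathbb{F}_\ell$-dimension is at most $r_\ell(n) := \#\{p \mid n : \ell \mid p-1\}$. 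Hence $d(G_\ell) \leq r_\ell(n)$.

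The last step is the counting bound. Every prime $p$ with $\ell \mid p-1$ satisfies $p \geq 3$: for $\ell = 2$ this is any odd prime, and for $\ell \geq 3$ one has $p \geq 2\ell+1 \geq 7$ since $p = \ell+1$ is even. Since $n$ is square-free, $n$ dominates the product of the primes counted by $r_\ell(n)$, so $n \geq 3^{r_\ell(n)}$ and therefore $r_\ell(n) \leq \log(n)/\log 3 < \log(n)$. Combining, $d(G) = \max_\ell d(G_\ell) \leq \max_\ell r_\ell(n) < \log(n)$, which is even slightly stronger than the claim. The only point needing genuine attention is the conductor step, namely verifying that tameness plus the ramification restriction actually forces $L_\ell$ inside $\mathbb{Q}(\zeta_n)$ rather than some larger cyclotomic field; the remainder is a straightforward assembly of Burnside, Kronecker--Weber, and the above crude bound.
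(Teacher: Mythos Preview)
Your proof is correct and follows essentially the same route as the paper's: reduce to a single Sylow via nilpotency, pass to its elementary abelian Frattini quotient via Burnside, use Kronecker--Weber plus tameness to land inside $\mathbb{Q}(\zeta_n)$ (the paper phrases this as ``$2$ is wildly ramified in any abelian extension in which it ramifies, and each odd ramified prime contributes at most one generator''), and conclude via $n \geq 3^{d(G)}$. Your write-up is a bit more explicit about the conductor step and the counting $r_\ell(n)$, but the underlying argument is identical.
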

\begin{proof}
	Because $G$ is nilpotent, $d(G) = \max \lbrace d(P) | P \text{ is a Sylow subgroup of } G \rbrace$. Since each Sylow subgroup is isomorphic to some quotient of $G$, each Sylow subgroup is also in $\pi_A^t(U_n)$. Thus, we may restrict our attention to the case in which $G$ is a $p$-group.

	Since $G$ is a $p$-group, by the Burnside basis theorem
	\[ G/\phi(G) \cong \left( \mathbb{Z}/p\mathbb{Z} \right)^{d(G)}.	\]
	Hence, $\left( \mathbb{Z}/p\mathbb{Z} \right)^{d(G)} \in \pi_A^t(U_n)$ as well. By the Kronecker-Weber theorem, if $2$ is ramified in an abelian extension then it is wildly ramified. Again by the Kronecker-Weber theorem, each odd prime that ramifies in an abelian extension can increase the minimal size of a generating set of the Galois group by at most $1$. Hence, at least $d\left(\left( \mathbb{Z}/p\mathbb{Z} \right)^{d(G)}\right) = d(G)$ many odd primes must ramify in the extension providing witness to the fact that $\left( \mathbb{Z}/p\mathbb{Z} \right)^{d(G)} \in \pi_A^t(U_n)$. Thus, at least $d(G)$ many odd primes divide $n$ and so $d(G) \leq \log(n)$
\end{proof}

\begin{remark}
	In the nilpotent case we may drop the tameness assumption in Harbater's conjecture as long as we use $C=2$ instead of $C=0$. That is, if $G \in \pi_A(U_n)$ is nilpotent, then $d(G) \leq \log(n) + 2$. The proof would proceed as in Proposition \ref{HCTN}, except now $2$ may ramify. If $2$ ramifies, the Kronecker-Weber theorem tells us that this contributes at most $2$ to the minimal size of a generating set of the Galois group.
\end{remark}

\subsection{Groups with an Index $2$ or Index $3$ Nilpotent Subgroup}
We now prove the validity of Harbater's conjecture if we restrict to Galois groups having an index $2$ or an index $3$ nilpotent subgroup.

\subsubsection{The Index 2 Case}
We start with the index $2$ case by examining nilpotent extensions of quadratic number fields.
\begin{lemma} \label{CNQ}
	There is a constant $C$ such that if $F$ is any quadratic extension of $\mathbb{Q}$ with discriminant $d$ and class number $h$, then $\log_2(h) < C + .8 \cdot \log\left(\frac{\lvert d \rvert}{4}\right)$.
\end{lemma}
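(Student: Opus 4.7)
The plan is to invoke the analytic class number formula together with standard upper bounds on the Dirichlet $L$-value at $s=1$, and, in the real case, a lower bound on the regulator. For an imaginary quadratic $F$ of discriminant $d<0$, one has $h = \frac{w\sqrt{|d|}}{2\pi}L(1,\chi_d)$ with the number of roots of unity $w \in \{2,4,6\}$; for a real quadratic $F$ of discriminant $d>0$ with fundamental unit $\epsilon>1$, one has $h = \frac{\sqrt{d}}{2\log\epsilon}L(1,\chi_d)$. In both situations the goal is reduced to proving the uniform bound $h = O\bigl(\sqrt{|d|}\,\log|d|\bigr)$, from which the desired inequality follows after taking $\log_2$ and noting that $0.8\log(|d|/4)$ and $0.8\log|d|$ differ only by an absolute constant.

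For the $L$-value I would apply the standard elementary estimate $L(1,\chi_d) \leq \tfrac{1}{2}\log|d| + O(1)$, obtainable from partial summation against the Polya-Vinogradov bound on character sums, or from truncating the series for $L(s,\chi_d)$ at $s=1$ and estimating the tail using the conductor. For the regulator in the real case, it suffices to use the crude uniform lower bound $\log\epsilon \geq \log 2$ (any nontrivial totally positive unit in the ring of integers exceeds $1$, and in fact $\epsilon \geq (1+\sqrt{d})/2$); this contributes only a multiplicative constant. Combining these yields $h \leq c\sqrt{|d|}\log|d|$ with $c$ absolute.

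Taking base-$2$ logarithms gives
\[
\log_2(h) \;\leq\; \frac{1}{2\log 2}\,\log|d| \;+\; O(\log\log|d|).
\]
Since $\frac{1}{2\log 2} \approx 0.7213 < 0.8$, writing $0.8\log(|d|/4) = 0.8\log|d| - 0.8\log 4$, the coefficient slack $0.8 - \frac{1}{2\log 2} \approx 0.0787$ on $\log|d|$ absorbs both the constant $0.8\log 4$ and the sub-leading $O(\log\log|d|)$ term once $|d|$ is large enough. The finitely many remaining small discriminants are swept into the constant $C$.

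The main obstacle is numerical rather than conceptual: the gap between $0.8$ and $\frac{1}{2\log 2}$ is narrow, so I must ensure that the $L(1,\chi_d)$ estimate and the regulator bound each introduce at most a polylogarithmic factor in $|d|$; any polynomial-in-$\log|d|$ estimate for $L(1,\chi_d)$ works, but one cannot afford, for example, a $|d|^\epsilon$ slip. The real and imaginary cases are handled uniformly once the regulator is bounded separately, and no deep inputs (such as Brauer--Siegel) are needed — the classical Dirichlet bounds suffice.
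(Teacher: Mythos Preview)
Your proof is correct and takes a more elementary route than the paper's. The paper invokes the Brauer--Siegel theorem (with $\epsilon = 0.1$) to obtain $\log(hR) < 1.1\log\sqrt{|d|}$ for all but finitely many $F$, together with Friedman's uniform lower bound $R > 0.48$ on the regulator, yielding $\log_2 h < \text{const} + \frac{1.1}{2\log 2}\log|d|$ with $\frac{1.1}{2\log 2} \approx 0.794 < 0.8$. You instead unpack this by applying the analytic class number formula directly with the elementary estimate $L(1,\chi_d) \ll \log|d|$; this gives the sharper leading coefficient $\frac{1}{2\log 2} \approx 0.721$ at the cost of an extra $O(\log\log|d|)$ term, which the slack against $0.8$ easily absorbs. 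Your approach has the advantage of sidestepping the ineffectivity that accompanies the full Brauer--Siegel statement, since only the easy upper-bound direction is actually needed here. One minor imprecision: the claimed bound $\log\epsilon \geq \log 2$ fails for $\mathbb{Q}(\sqrt{5})$, but your parenthetical inequality $\epsilon \geq (1+\sqrt{d})/2$ is in fact correct and already supplies the uniform positive lower bound you need (namely $R \geq \log\frac{1+\sqrt{5}}{2} \approx 0.481$, essentially the Friedman bound the paper cites), so the argument is unaffected.
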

\begin{proof}
	 List the quadratic number fields,  $F_1,F_2,\dots,F_i,\dots$, ordered by increasing size of the absolute value of their discriminants, $\lvert d_i \rvert$. Let $h_i$ and $R_i$ denote the class number of $F_i$ and the regulator of $F_i$ respectively. By the Brauer-Siegel theorem in \cite{RB}, for all $\epsilon > 0,$ there is an $N \in \mathbb{N}$ such that if $i > N$, 
	 \[ \frac{\log\left(h_iR_i\right)}{\log\left(\lvert d_i \rvert^{\frac{1}{2}}\right)} < (1+\epsilon). \] Let $\epsilon = .1$. Then for $i > N$ we have 
	\[	\log(h_iR_i) < 1.1 \cdot \log\left(\lvert d_i \rvert^{\frac{1}{2}}\right). 	\] 
	Since there are only finitely many number fields of bounded discriminant, we may also choose $N$ large enough so that if $i>N$, then $\lvert d_i \rvert > 4$. 
	
	Let 
	\[ C_1 = \max\{\log_2(h_i)\}_{1\leq i \leq N} + \left| \log\left(\frac{1}{4}\right) \right|. \]
	Then, for $1 \leq i \leq N$,
	\[ \log_2(h_i) < C_1 + .8 \cdot \log\left( \frac{\lvert d_i \rvert}{4} \right).		\]
	For $i>N$,
	\[ h_i < \frac{1}{R_i} \cdot \lvert d_i \rvert^{\frac{1.1}{2}}.  \]
	Taking the base $2$ logarithm of both sides of this inequality, we get
	\begin{equation*}
		\begin{split}
			\log_2(h_i) & < \log_2\left(\frac{1}{R_i}\right) + \frac{1.1}{2\log(2)} \cdot \log\left(\lvert d_i \rvert\right)  \\
			& < \log_2\left(\frac{1}{R_i}\right) + .8 \cdot \log\left(\lvert d_i \rvert\right) \\
			& = \log_2\left(\frac{1}{R_i}\right) + .8\cdot \log(4)  + .8 \cdot \log\left( \frac{\lvert d_i \rvert}{4} \right).
		\end{split}
	\end{equation*}	
	By \cite{ADF}, the regulator of a quadratic number field is larger than $.48$. Hence, there is a constant $C_2$ such that
	\[ \log_2\left( \frac{1}{R_i} \right) + .8 \cdot \log(4) < C_2.	\]
	Thus,
	\[	\log_2(h_i) < C_2 + .8 \cdot \log\left(\frac{\lvert d_i \rvert}{4}\right). 	\]
	
	\noindent Letting $C = \max\{C_1,C_2\}$ completes the proof of the lemma.
\end{proof}

We now consider the case where the index $2$ subgroup is abelian.
\begin{lemma} \label{ATQ}
	There is a constant $C$ such that if $K/\mathbb{Q}$ is any extension of $\mathbb{Q}$ with a quadratic sub-extension, $F$, over which $K$ is abelian Galois and tamely ramified and if $K/\mathbb{Q}$ is unramified outside of primes dividing $n$ and $\infty$, then $d\left(\operatorname{Gal}(K/F)\right) + 1 \leq \log(n) + C$.
\end{lemma}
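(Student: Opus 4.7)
The plan is to use class field theory to realize $A := \operatorname{Gal}(K/F)$ as a quotient of a ray class group of $F$ and then bound $d(A)$ piece by piece. Since $K/F$ is abelian and tamely ramified at finite primes, class field theory embeds $A$ as a quotient of $\operatorname{Cl}_{\mathfrak{m}}(F)$ for some modulus $\mathfrak{m} = \mathfrak{m}_0 \mathfrak{m}_\infty$; tameness forces the conductor exponent at every finite prime to be at most $1$, so $\mathfrak{m}_0$ is squarefree. The primes of $F$ dividing $\mathfrak{m}_0$ lie above rational primes dividing $n$, and $\mathfrak{m}_\infty$ is supported on at most $r_1(F) \leq 2$ real places since $F$ is quadratic.

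Next I would apply the standard exact sequence
\[
\mathcal{O}_F^\times \longrightarrow (\mathcal{O}_F/\mathfrak{m}_0)^\times \times \{\pm 1\}^{|\mathfrak{m}_\infty|} \longrightarrow \operatorname{Cl}_{\mathfrak{m}}(F) \longrightarrow \operatorname{Cl}(F) \longrightarrow 0
\]
to conclude $d(A) \leq \omega(\mathfrak{m}_0) + 2 + d(\operatorname{Cl}(F))$. Each residue field group $(\mathcal{O}_F/\mathfrak{p})^\times$ is cyclic, contributing a single generator, and each rational prime splits into at most two primes of $F$, so $\omega(\mathfrak{m}_0) \leq 2\omega(n)$. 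For the class group term, I invoke Lemma \ref{CNQ} to obtain $d(\operatorname{Cl}(F)) \leq \log_2 h_F < C_1 + 0.8 \log(|d_F|/4)$, and observe that the primes ramifying in $F/\mathbb{Q}$ divide $n$, so that $|d_F| \leq 4n$ and hence $\log(|d_F|/4) \leq \log n$.

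Combining the estimates yields $d(A) + 1 \leq 2\omega(n) + 3 + C_1 + 0.8 \log n$, so the desired conclusion reduces to bounding $2\omega(n) - 0.2 \log n$ by a universal constant. This is the main obstacle, and the key trick is to rewrite
\[
2\omega(n) - 0.2 \log n \;=\; \sum_{p \mid n} \bigl(2 - 0.2 \log p\bigr),
\]
whose summand is positive precisely when $p < e^{10}$. Hence the sum is uniformly bounded above by the finite constant $M := \sum_{p \text{ prime},\, p < e^{10}} (2 - 0.2 \log p)$. Setting $C := C_1 + M + 3$ then gives $d(A) + 1 \leq \log n + C$ as required. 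The subtle point, beyond routine class field theory, is recognizing that the apparent factor-of-two overhead coming from primes that split in $F$ (each contributing two generators to $(\mathcal{O}_F/\mathfrak{m}_0)^\times$) is actually absorbed into a bounded constant, since only finitely many primes $p$ satisfy $\log p < 10$.
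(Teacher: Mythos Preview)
Your argument is correct and follows essentially the same approach as the paper: bound $d(\operatorname{Gal}(K/F))$ via the ray class group exact sequence, invoke Lemma~\ref{CNQ} for the class group term, and absorb the $\omega$-term into $\log n$ by noting that only finitely many primes satisfy $2 - c\log p > 0$. Your version is slightly more streamlined than the paper's, which tracks the conductor support $m$ and the discriminant $d$ separately and then recombines them via a $\gcd$ argument, whereas you bound everything directly in terms of $n$ from the outset.
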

\begin{proof}
	Let $K$ be a number field satisfying the hypotheses of the lemma. We will construct a constant, independent of $K$, for which the above inequality holds.
	
	Let $F$ be the quadratic sub-extension of $K/\mathbb{Q}$. Since $K/F$ is abelian, $K$ is a subfield of some ray class field of $F$ for some modulus $\mathfrak{m}$. 
	
	$$\begin{tikzpicture}
	\matrix(m)[matrix of math nodes,
	row sep=3em, column sep=4.5em,
	text height=1.5ex, text depth=0.25ex]
	{ \text{Ray class field for the modulus} \; \mathfrak{m}  \\
		K \\
		F	  \\
		\mathbb{Q} \\ };
	\path[-,font=\scriptsize]
	(m-1-1) edge node[auto] {} (m-2-1)
	(m-2-1) edge node[left] {\text{Abelian}} (m-3-1)
	(m-3-1) edge node[left] {\text{Degree} 2} (m-4-1)
	;
	\end{tikzpicture}.$$
	Since $K/F$ is a tame extension, we may assume that the highest power of each prime ideal dividing $\mathfrak{m}$ is $1$. Furthermore, we may assume that each prime ideal $\mathfrak{P} \mid \mathfrak{m}$ ramifies in $K/F$, for otherwise we can replace $\mathfrak{m}$ with $\frac{\mathfrak{m}}{\mathfrak{P}}$. Let $m$ be the square-free integer obtained by multiplying together all the integral primes lying under some prime ideal $ \mathfrak{P} \mid \mathfrak{m}$. Since each prime ideal $\mathfrak{P} \mid \mathfrak{m}$ ramifies in $K/F$, each prime integer $p \mid m$ ramifies in $K/\mathbb{Q}$.
	
	Let $Cl_\mathfrak{m}(F)$ denote the ray class group for the modulus $\mathfrak{m}$ and let $Cl(F)$ denote the ideal class group of $F$. By Proposition 3.2.3 in \cite{HC}, $Cl(F)$ is isomorphic to $Cl_\mathfrak{m}(F)$ modulo some homomorphic image of $\left( \mathcal{O}_F / \mathfrak{m} \right)^\ast$. Hence,
	\[	d\left( Cl_\mathfrak{m}(F) \right) \leq d \left( \left( \mathcal{O}_F / \mathfrak{m} \right)^\ast \right) +  d \left( Cl(F) \right).	\]
	Letting $h$ be the class number of $F$, we have
	\[	d\left( Cl(F) \right) \leq \log_2(h).	\]
	Write $\mathfrak{m} = \mathfrak{m}_0 \mathfrak{m}_\infty$ where $\mathfrak{m}_0$ denotes the finite part of $\mathfrak{m}$ and $\mathfrak{m}_\infty$ denotes the infinite part of $\mathfrak{m}.$ By the Chinese remainder theorem and the fact that each prime ideal $\mathfrak{P} \mid \mathfrak{m}_0$  only does so to the first power,
	\begin{equation*}
		\begin{split}
			 \left( \mathcal{O}_F / \mathfrak{m} \right)^\ast & =  \left( \mathcal{O}_F / \mathfrak{m}_0 \right)^\ast \times \left( \mathbb{Z}/2\mathbb{Z}	\right)^{\left| \mathfrak{m}_\infty \right|} \\
			 & \cong \prod_{\mathfrak{P} \mid \mathfrak{m}_0} \left( \mathcal{O}_F / \mathfrak{P} \right)^\ast \times \left( \mathbb{Z}/2\mathbb{Z}	\right)^{\left| \mathfrak{m}_\infty \right|}.
		\end{split}
	\end{equation*}
	Since each $\mathfrak{P}$ is a prime ideal, each $\left( \mathcal{O}_F / \mathfrak{P} \right)^\ast$ is isomorphic to the multiplicative group of some finite field and is cyclic. Because a quadratic number field has at most two infinite places, $\left( \mathbb{Z}/2\mathbb{Z}	\right)^{\left| \mathfrak{m}_\infty \right|}$ is the product of at most two cyclic groups. Moreover, each $p \mid m$ can split into at most two prime ideals in $F$, and so the number of prime ideals $\mathfrak{P} \mid \mathfrak{m}_0$ is at most twice the number of prime integers $p \mid m$. Letting $\omega(m)$ denote the number of prime factors of $m$, we obtain that
	\begin{equation*}
		\begin{split}
			d\left( Cl_\mathfrak{m}(F) \right) & \leq d \left( \prod_{\mathfrak{P} \mid \mathfrak{m}_0} \left( \mathcal{O}_F / \mathfrak{P} \right)^\ast \times \left( \mathbb{Z}/2\mathbb{Z}	\right)^{\left| \mathfrak{m}_\infty \right|} \right) +  d \left( Cl(F) \right) \\
			& \leq 2 \cdot \omega(m) + 2 + \log_2(h). 	
		\end{split}
	\end{equation*}
	Because $K$ is a subfield of the ray class field, $\operatorname{Gal}(K/F)$ is a quotient of $Cl_\mathfrak{m}(F)$. Thus,
	\[	d \left( \operatorname{Gal}(K/F)  \right) \leq d \left( Cl_\mathfrak{m}(F) \right),	\]
	and so
	\[	d\left(\operatorname{Gal}(K/F)\right) + 1 \leq 2 \cdot \omega(m) + 2 + \log_2(h) + 1.	\]
	
	Let $\pi(\cdot)$ denote the prime counting function. Note that
	\[	2 \cdot \omega(m) \leq 2 \cdot \pi \left( 3^{20} \right) + .1 \cdot \log(m).	\]
	This is because each prime $p \mid m$ with $p \leq 3^{20}$ contributes $2$ to the left hand side of the above inequality which is canceled out by the $2 \cdot \pi \left( 3^{20} \right)$ on the right hand side. Each prime $p \mid m$ with $p > 3^{20}$ still only contributes $2$ to the left hand side but contributes $.1 \cdot \log(p) > .1 \cdot \log(3^{20}) > 2$ to the right hand side. Letting
	\[ C_1 = 2 \cdot \pi \left( 3^{20}	\right) + 2 \]
	gives
	\[	2 \cdot \omega(m)  + 2 \leq C_1 + .1 \cdot \log(m). 	\]
	Letting $d$ be the discriminant of $F/\mathbb{Q}$, by Lemma \ref{CNQ} there is a constant $C_2$, independent of $F$, such that
	\[	\log_2(h) + 1 < C_2 + .8 \cdot \log \left( \frac{\left| d \right |}{4} \right).	\]
	Hence,
	\[ d\left(\operatorname{Gal}(K/F)\right) + 1 < C_1 + .1 \cdot \log\left( m \right) + C_2 + .8 \cdot \log \left( \frac{\left| d \right |}{4} \right).	\]
	Let $C = C_1 + C_2 + 2$ and $A = \gcd \left(\lvert d \rvert, m\right)$. Note that $C$ is independent of $K$ and that 
	\begin{equation*}
		\begin{split}
			d\left(\operatorname{Gal}(K/F)\right) + 1 & < (C_1 + C_2) + .1 \cdot \log\left( m \right)  + .8 \cdot \log \left( \frac{\left| d \right |}{4} \right) \\
			 & = (C_1 + C_2) + .1 \cdot \log\left( A \right) + .1 \cdot \log \left( \frac{m}{A}	\right) + .8 \cdot \log(A) + .8 \cdot \log \left( \frac{\lvert d \rvert}{4A} \right) \\
			 & < (C_1 + C_2 + 1) + .9 \cdot \log(A) + .9 \cdot \log \left( \frac{\lvert d \rvert}{4A}	\right) + .9 \cdot \log \left( \frac{m}{A} \right) \\
			 & = (C_1 + C_2 + 1) + .9 \cdot \log \left( A \cdot \frac{\lvert d \rvert}{4A} \cdot \frac{m}{A}	\right) \\
			 & < (C_1 + C_2 + 2) + \log \left( A \cdot \frac{\lvert d \rvert}{4A} \cdot \frac{m}{A}	\right) \\
			 & = C + \log \left( \frac{\frac{\lvert d \rvert}{4} \cdot m}{\gcd \left( \lvert d \rvert, m\right)} \right).
		\end{split}
	\end{equation*}
	Noting that the product of the ramified primes in $K/\mathbb{Q}$ is at least $ \frac{\frac{\lvert d \rvert}{4} \cdot m}{\gcd \left( \lvert d \rvert, m\right)}$ completes the proof of the lemma.
\end{proof}

Finally, we now allow $K$ to be any nilpotent, tamely ramified extension over the quadratic sub-extension.
\begin{theorem} \label{NI2}
	There is a constant $C$ such that for every positive square-free integer $n$, if $G \in \pi_A^t(U_n)$ has a nilpotent subgroup of index $2$, then $d(G) \leq \log(n) + C$.
\end{theorem}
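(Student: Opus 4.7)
The plan is to reduce to Lemma \ref{ATQ} one prime at a time, by passing from the nilpotent index $2$ subgroup to the maximal elementary abelian $p$-quotient of each of its Sylow subgroups. Let $K/\mathbb{Q}$ be a tame Galois extension realizing $G$, unramified outside primes dividing $n$, and let $H$ be a nilpotent subgroup of $G$ of index $2$. Since $H$ has index $2$, it is normal in $G$, and $F := K^H$ is a quadratic extension of $\mathbb{Q}$. Decompose $H = \prod_p P_p$ as the product of its Sylow subgroups.

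For each prime $p$, set $N_p = \Phi(P_p) \cdot \prod_{q \neq p} P_q$. Because $P_p$ is characteristic in $H$ (as the unique Sylow $p$-subgroup of a nilpotent group) and $\Phi(P_p)$ is characteristic in $P_p$, the subgroup $N_p$ is characteristic in $H$, and hence normal in $G$. By the Burnside basis theorem, $H/N_p \cong P_p/\Phi(P_p) \cong (\mathbb{Z}/p\mathbb{Z})^{d(P_p)}$. Letting $L_p = K^{N_p}$, we obtain a Galois extension $L_p/\mathbb{Q}$ which is tame, unramified outside primes dividing $n$, and which contains $F$ as a quadratic sub-extension with $\operatorname{Gal}(L_p/F) \cong (\mathbb{Z}/p\mathbb{Z})^{d(P_p)}$ abelian. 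Applying Lemma \ref{ATQ} to $L_p$ produces a constant $C_0$, independent of $K$, $p$, and $n$, for which $d(P_p) + 1 \leq \log(n) + C_0$.

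Because $H$ is nilpotent, $d(H) = \max_p d(P_p)$, so $d(H) \leq \log(n) + C_0 - 1$. Since $G/H \cong \mathbb{Z}/2\mathbb{Z}$ is cyclic, adjoining any element of $G \setminus H$ to a minimal generating set of $H$ yields a generating set of $G$, hence $d(G) \leq d(H) + 1 \leq \log(n) + C_0$. Taking $C = C_0$ completes the argument.

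The one subtlety of this approach is ensuring that the intermediate field $L_p$ is Galois over $\mathbb{Q}$, so that Lemma \ref{ATQ} actually applies; this forces $N_p$ to be normal in all of $G$ rather than merely in $H$, which is precisely why we pass to the Frattini kernel (a characteristic construction) within each Sylow rather than to an arbitrary maximal elementary abelian $p$-quotient of $H$. Everything else is a direct bookkeeping consequence of the nilpotent structure of $H$ and Lemma \ref{ATQ}.
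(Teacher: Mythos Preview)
Your proof is correct and follows essentially the same route as the paper: pass to the quadratic fixed field $F$ of the index-$2$ nilpotent subgroup $H$, reduce to a single Sylow $P_p$ via $d(H)=\max_p d(P_p)$, take the Frattini quotient $P_p/\Phi(P_p)$, and apply Lemma~\ref{ATQ}. The only cosmetic difference is that the paper picks the maximizing prime first rather than looping over all $p$, and your careful check that $L_p/\mathbb{Q}$ is Galois, while correct, is not actually required by Lemma~\ref{ATQ} as stated (it only asks that $L_p/F$ be abelian Galois).
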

\begin{proof}
	Let $C$ be the constant from Lemma \ref{ATQ}. Let $G \in \pi_A^t(U_n)$ have a nilpotent subgroup $H$ with $[G:H] = 2$. Let $K$ be an extension providing witness to the fact that $G \in \pi_A^t(U_n)$. We must show that $d(G) \leq \log(n) + C$. Without loss of generality, we may assume that all primes dividing $n$ ramify in $K/\mathbb{Q}$, since we can otherwise replace $n$ with the product of the ramified primes in $K/\mathbb{Q}$. Let $F$ be the quadratic number field corresponding to the fixed field for $H$. Note now that
	\[ d(H) = \max \{d(P) \mid P \text{ is  a Sylow subgroup of } H \}.	\]
	So, choose some Sylow subgroup $P \leq H$ such that $d(H) = d(P)$. Letting $S$ denote the product of the remaining Sylow subgroups, we get
	\[	H \cong P \times S. 	\]
	Let $E$ denote the fixed field under $S$ of $K/F$. Hence,
	\[\operatorname{Gal}(E/F) \cong P.	\]
	Finally, take the fixed field, $L$, for the Frattini subgroup of $P$. We get $L/F$ is a sub-extension of $E/F$ with $\operatorname{Gal}(L/F) \cong P/\Phi(P).$
	$$\begin{tikzpicture}
	\matrix(m)[matrix of math nodes,
	row sep=3em, column sep=4.5em,
	text height=1.5ex, text depth=0.25ex]
	{ K	  \\
		E \\
		L	  \\
		F \\
		\mathbb{Q} \\ };
	\path[-,font=\scriptsize]
	(m-1-1) edge node[auto] {$S$} (m-2-1)
	(m-2-1) edge node[auto] {$\Phi(P)$} (m-3-1)
	(m-3-1) edge node[auto] {$P/\Phi(P)$} (m-4-1)
	(m-4-1) edge node[auto] {} (m-5-1)
	(m-1-1) edge[bend right = 90] node[left] {$G$} (m-5-1)
	(m-2-1) edge[bend right = 90] node[left] {$P$} (m-4-1)
	(m-1-1) edge[bend left = 90] node[right] {$H$} (m-4-1)
	;
	\end{tikzpicture}.$$
	By the Burnside basis theorem, $P/\Phi(P)$ is abelian. By Lemma \ref{ATQ},
	\[ d\left( P/\Phi(P)\right) + 1 \leq \log(n) + C.	\]
	Thus,
	\[ d(G) \leq d(H) + 1  = d(P) + 1  = d\left(P/\Phi(P)\right) + 1  \leq \log(n) + C. \]
\end{proof}

\subsubsection{The Index 3 Case}
We now consider the index $3$ situation; the proofs are similar to the index $2$ case. For an integer $d$, we will let $\operatorname{rad}(d) = \prod_{p \mid d, p \text{ prime }} p$. 
\begin{lemma} \label{AUC}
	There is a constant $C$ such that if $F/\mathbb{Q}$ is any cubic extension of $\mathbb{Q}$ with discriminant $d$, then $ d \left( Cl(F) \right) < C + .95 \cdot \log\left(\operatorname{rad}(d) \right)$.
\end{lemma}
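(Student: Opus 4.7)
The plan parallels that of Lemma~\ref{CNQ}, but two features of the cubic case force us to argue more carefully. First, for a cubic field the discriminant can grow like $\operatorname{rad}(d_F)^2$: since wild ramification can occur only at $3$ and contributes at most $3^5$ to the discriminant, one has $|d_F|\le 27\,\operatorname{rad}(d_F)^2$, a bound nearly attained when every ramified prime is totally ramified. Second, the crude inequality $d(Cl(F))\le\log_2 h_F$ is too weak to yield the target constant $0.95$, so I would instead split $Cl(F)=Cl(F)_2\oplus Cl(F)_{\text{odd}}$, note that $d(Cl(F))=\max\{d(Cl(F)_2),d(Cl(F)_{\text{odd}})\}$, and bound the two summands by different methods.

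Following the template of Lemma~\ref{CNQ}, I would enumerate cubic fields $F_1,F_2,\ldots$ in increasing order of $|d_i|$ and apply Brauer-Siegel: for every $\epsilon>0$ there is $N$ such that $\log(h_iR_i)<(1+\epsilon)\log|d_i|^{1/2}$ whenever $i>N$. A uniform positive lower bound on $R_i$ for cubic fields (Remak's inequality in the totally real case and Friedman's bound when there is a complex place) lets me absorb $\log R_i$ into a constant, and combining with $|d_i|\le 27\,\operatorname{rad}(d_i)^2$ yields $\log h_i<(1+\epsilon)\log\operatorname{rad}(d_i)+C_1$. Since every invariant factor of $Cl(F)_{\text{odd}}$ has order at least $3$, one has $d(Cl(F)_{\text{odd}})\le\log_3|Cl(F)_{\text{odd}}|\le\log h_F/\log 3$. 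Choosing $\epsilon=0.04$ so that $(1+\epsilon)/\log 3<0.95$, this gives $d(Cl(F)_{\text{odd}})<0.95\log\operatorname{rad}(d_F)+C_2$.

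For the $2$-primary part, a genus-theoretic argument---implemented via class field theory, applied through the Galois closure (which is $F$ itself when $F$ is cyclic cubic, and an $S_3$-extension with quadratic resolvent $\mathbb{Q}(\sqrt{d_F})$ otherwise)---bounds the $2$-rank of $Cl(F)$ by $\omega(d_F)$, the number of primes ramified in $F/\mathbb{Q}$. Because $\operatorname{rad}(d_F)$ contains at most one factor of $2$ while every other prime factor is at least $3$, $\omega(d_F)\le 1+\log_3\operatorname{rad}(d_F)<1+0.95\log\operatorname{rad}(d_F)$, so $d(Cl(F)_2)$ sits well within the desired bound. Taking the maximum of the two bounds, and absorbing the finitely many fields with $i\le N$ into a single constant $C$, produces the lemma.

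The main obstacle is the $2$-rank bound $d(Cl(F)_2)\le\omega(d_F)$. While this is Gauss's classical genus theory for quadratic fields, in the cubic setting---particularly for non-Galois $F$, whose Galois closures carry an $S_3$-action on the class group---establishing it requires a careful class-field-theoretic argument through the Galois closure, possibly via Chevalley's ambiguous-class-number formula applied to the quadratic subextension $\tilde F/K$. Without such a structural input, the naive inequality $d(Cl(F))\le\log_2 h_F$ only produces a constant near $1/\log 2\approx 1.44$, far worse than $0.95$.
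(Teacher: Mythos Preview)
Your overall architecture---split $Cl(F)$ into its $2$-primary and odd parts and bound each separately---matches the paper's, and your treatment of the odd part is essentially identical: Brauer--Siegel plus a uniform lower bound on the regulator gives $h_F \ll_{\epsilon} |d|^{(1+\epsilon)/2}$, whence $d(Cl(F)_{\text{odd}}) \leq \log_3 h_F < 0.95\log|d|^{1/2} + O(1)$, and the passage from $|d|^{1/2}$ to $\operatorname{rad}(d)$ via the local conductor bounds is the same as yours.

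The gap is exactly where you flag it: the $2$-rank. The paper does \emph{not} use genus theory here; it invokes the theorem of Bhargava--Shankar--Taniguchi--Thorne--Tsimerman--Zhao \cite{BSTTTZ}, which bounds $|Cl(F)[2]| \ll |d|^{0.2785}$ for cubic $F$, giving $2$-rank $< 0.85\log|d|^{1/2} + O(1)$. Your proposed route---Chevalley's ambiguous-class-number formula through the Galois closure---does not yield a bound on the $2$-rank of $Cl(F)$. Chevalley's formula for a cyclic extension of prime degree $\ell$ controls the $\ell$-part of the class group (the ambiguous classes), so applying it to the cubic extension $\tilde F/K$ in the $S_3$ case, or to $F/\mathbb{Q}$ in the cyclic case, gives information about $3$-ranks (this is the mechanism behind Scholz reflection), not $2$-ranks. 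No elementary genus-type argument is known to bound the $2$-rank of a cubic field by $\omega(d_F)$; obtaining any power saving over the trivial $|Cl(F)[2]| \leq h_F \ll |d|^{1/2+\epsilon}$ was precisely the content of \cite{BSTTTZ}. So the proposal does not close without substituting in a result of that depth.
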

\begin{proof}
	Let $F$ be a number field with $[F:\mathbb{Q}] = 3.$ $Cl(F)$ is abelian and so $d \left( Cl(F) \right)$ is equal to the maximal rank of the $p$-Sylow subgroups of $Cl(F)$. We first consider the $2$-Sylow subgroup. By the remark following Theorem 1.1 in \cite{BSTTTZ}, there is a constant $C_1$, independent of $F$, such that the $2$-rank is bounded above by 
	\[ \log_2\left(C_1 \cdot \lvert d \rvert^{.2785}\right) = \log_2(C_1) + \frac{2 \cdot .2785}{\log(2)}\cdot \log\left(\lvert d \rvert^{\frac{1}{2}}\right). \]
	 Letting $C_2 = \log_2(C_1)$ and noting that $\frac{2 \cdot .2785}{\log(2)} < .85$, we get that the $2$-rank is less than
	 \[	C_2 + .85 \cdot \log\left(\lvert d \rvert^{\frac{1}{2}}\right).	\]
	 
	 For the ranks of the other Sylow subgroups we will consider the class group as a whole. An application of the Brauer-Siegel theorem, with $\epsilon = .01$, along with the fact that the regulator is at least $.28$ by \cite{ADF}, allows us to conclude that there is a constant, $C_3$, independent of $F$, such that 
	 \[	\lvert Cl(F) \rvert < C_3 \cdot \lvert d \rvert^{\frac{1+.01}{2}}.	\]
	 Hence, the $p$-rank for $p\geq 3$ is at most
	 \[ \log_3(C_3) + 1.01 \cdot \log_3\left(\lvert d \rvert^\frac{1}{2}\right). \]
	 Letting $C_4 = \log_3(C_3)$ and noting that $\frac{1.01}{\log(3)} < .95$, we get that the $p$-rank is at most
	 \[	C_4 + .95 \cdot \log\left(\lvert d \rvert^\frac{1}{2}\right).	\]
	 Finally, setting $C_5 = \max\{C_2,C_4\}$ gives
	 \[ d \left( Cl(F) \right) < C_5 + .95 \cdot \log\left(\lvert d \rvert^\frac{1}{2}\right).	\]
	 Note that if $p \notin \{2,3\}$, then $v_p(d) \leq 2$.  By \cite[Chapter 3, Section 6]{JPS}, $v_2(d) \leq 3$ and $v_3(d) \leq 5$. So, $\lvert d \rvert^\frac{1}{2} \leq 2 \cdot 3^3 \cdot \operatorname{rad}(d)$. Letting $C = C_5 + \log\left(2 \cdot 3^3\right),$ we conclude that
	 \[ d \left( Cl(F) \right) < C + .95 \cdot \log\left( \operatorname{rad}(d)\right).		\]
\end{proof}

\begin{lemma} \label{ATC}
	There is a constant $C$ such that if $K/\mathbb{Q}$ is any Galois extension of $\mathbb{Q}$ with a cubic sub-extension, $F$, over which $K$ is abelian and tamely ramified and if $K/\mathbb{Q}$ is unramified outside of primes dividing $n$ and $\infty$, then $d\left(\operatorname{Gal}(K/F)\right) + 1 \leq \log(n) + C$.
\end{lemma}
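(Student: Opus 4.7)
The plan is to adapt the proof of Lemma \ref{ATQ} to the cubic setting, with Lemma \ref{AUC} taking the role Lemma \ref{CNQ} plays there. I would start by observing that since $K/F$ is abelian, $K$ lies inside the ray class field of $F$ for some modulus $\mathfrak{m}$; tameness of $K/F$ allows us to assume each prime ideal dividing the finite part $\mathfrak{m}_0$ appears to the first power and genuinely ramifies in $K/F$. Let $m$ be the squarefree integer obtained by multiplying the rational primes lying under the prime ideals dividing $\mathfrak{m}_0$, and let $d$ be the discriminant of $F$. Every prime dividing $m$ ramifies in $K/\mathbb{Q}$, and every prime dividing $\operatorname{rad}(d)$ ramifies in $F/\mathbb{Q}$ and hence in $K/\mathbb{Q}$; therefore both $m$ and $\operatorname{rad}(d)$ divide $n$, and so $\operatorname{lcm}(m,\operatorname{rad}(d)) = m \cdot \operatorname{rad}(d) / \gcd(m, \operatorname{rad}(d))$ divides $n$ as well.

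Next I would apply Proposition 3.2.3 of \cite{HC} to get $d(\operatorname{Gal}(K/F)) \le d(Cl_{\mathfrak{m}}(F)) \le d((\mathcal{O}_F/\mathfrak{m})^{*}) + d(Cl(F))$ and bound each summand on the right. The Chinese remainder theorem decomposes $(\mathcal{O}_F/\mathfrak{m})^{*}$ into a product of the cyclic groups $(\mathcal{O}_F/\mathfrak{P})^{*}$ for $\mathfrak{P} \mid \mathfrak{m}_0$, together with at most three copies of $\mathbb{Z}/2\mathbb{Z}$ coming from the infinite part (since a cubic field has at most three archimedean places). Because $[F:\mathbb{Q}]=3$, each rational prime splits into at most three prime ideals of $F$, so the number of finite cyclic factors is at most $3\omega(m)$; hence $d((\mathcal{O}_F/\mathfrak{m})^{*}) \le 3\omega(m) + 3$. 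Lemma \ref{AUC} then furnishes an absolute constant $C_0$ with $d(Cl(F)) < C_0 + 0.95\log(\operatorname{rad}(d))$.

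Combining these bounds gives $d(\operatorname{Gal}(K/F)) + 1 \le 3\omega(m) + 4 + C_0 + 0.95\log(\operatorname{rad}(d))$. As in Lemma \ref{ATQ}, one then picks a threshold (for instance $e^{60}$) so that $3 < 0.05\log(p)$ for every prime $p$ above it; this absorbs $3\omega(m) + 4 + C_0$ into $C_1 + 0.05\log(m)$ for an absolute constant $C_1$. The gcd trick from Lemma \ref{ATQ} then closes the argument: setting $A = \gcd(m, \operatorname{rad}(d))$, since $0.05 + 0.95 = 1$,
\begin{equation*}
0.05\log(m) + 0.95\log(\operatorname{rad}(d)) = \log(A) + 0.05\log(m/A) + 0.95\log(\operatorname{rad}(d)/A) \le \log\!\left( \frac{m \cdot \operatorname{rad}(d)}{A} \right) \le \log(n),
\end{equation*}
which yields $d(\operatorname{Gal}(K/F)) + 1 \le \log(n) + C$ with $C = C_1$. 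The main subtlety, I expect, is the fact that Lemma \ref{AUC} is phrased in terms of $\operatorname{rad}(d)$ rather than $|d|$: wild ramification at $3$ can inflate $|d|$ well beyond $\operatorname{rad}(d)$, and only the latter is directly controlled by $\log n$, so replacing $|d|$ by $\operatorname{rad}(d)$ in the class group bound is essential for the final estimate to cleanly collapse to $\log(n)$.
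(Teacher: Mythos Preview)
Your proposal is correct and follows essentially the same route as the paper's proof: both reduce to bounding $d(Cl_{\mathfrak m}(F))$ via $3\omega(m)+3+d(Cl(F))$, invoke Lemma \ref{AUC} for the class-group term, absorb $3\omega(m)$ into $\varepsilon\log(m)$ for a small $\varepsilon$, and finish with the same gcd trick to reach $\log\bigl(m\cdot\operatorname{rad}(d)/\gcd(m,\operatorname{rad}(d))\bigr)\le\log(n)$. The only differences are cosmetic choices of constants (you use $\varepsilon=0.05$ and threshold $e^{60}$ where the paper uses $\varepsilon=0.01$ and threshold $3^{300}$), and your closing remark correctly identifies why $\operatorname{rad}(d)$ rather than $|d|$ is needed.
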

\begin{proof}
	As in Lemma \ref{ATQ} we consider ray class fields. Let $F$ be the cubic sub-extension. Let $\mathfrak{m}$ be the smallest modulus admissible for $K$ and $m$ be the square-free product of integral primes lying under primes dividing $\mathfrak{m}$. We may assume that each $\mathfrak{P} \mid \mathfrak{m}$ only does so to the first power and that each such prime ideal also ramifies in $K/F$. An analogous argument as in Lemma \ref{ATQ} shows that $3 \cdot \omega(m) + 3 + d\left(Cl(F)\right)$ is an upper bound for $d\left(\operatorname{Gal}(K/F)\right)$. Letting $C_1 = 3 \cdot \pi(3^{300})$ gives $3 \cdot \omega(m) \leq C_1 + .01 \cdot \log(m)$. Letting $C_2 = C_1 + 3 + 1$, $C_3$ be the constant from Lemma \ref{AUC}, and $d$ be the discriminant of $F$, we have
	\[	d\left(\operatorname{Gal}(K/F)\right) + 1 < C_2 + .01 \cdot \log(m) + C_3 + .95 \cdot \log\left(\operatorname{rad}(d)\right).	\]
	Let $C = C_2 + C_3 +2$ and $A = \gcd(\operatorname{rad}(d),m)$. Then,
	\begin{equation*}
		\begin{split}
			d\left(\operatorname{Gal}(K/F)\right) + 1 & < (C_2 + C_3) + .01 \cdot \log\left(\frac{m}{A}\right) + .01 \cdot \log(A) + .95 \cdot \log\left(\frac{\operatorname{rad}(d)}{A}\right) + .95 \cdot \log(A) \\
			& < (C_2 + C_3 + 2) + .96 \cdot \log\left(\frac{m}{A}\right) + .96 \cdot \log\left(\frac{\operatorname{rad}(d)}{A}\right) + .96 \cdot \log(A) \\
			& = C + .96 \cdot \log\left(\frac{m \cdot \operatorname{rad}(d)}{A}\right) \\
			& < C + \log \left(\frac{m \cdot \operatorname{rad}(d)}{\gcd(\operatorname{rad}(d),m)} \right).
		\end{split}
	\end{equation*}
	Note now that $\frac{m \cdot \operatorname{rad}(d)}{\gcd(\operatorname{rad}(d),m)}$ is precisely the product of the ramified primes in $K/\mathbb{Q}$, and so is at most $n$. This completes the proof of the lemma.
\end{proof}
\begin{theorem} \label{NI3}
	There is a constant $C$ such that for every positive square-free integer $n$, if $G \in \pi_A^t(U_n)$ has a nilpotent subgroup of index $3$, then $d(G) \leq \log(n) + C$.
\end{theorem}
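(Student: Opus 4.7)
The plan is to run essentially the same construction as in Theorem~\ref{NI2}, with Lemma~\ref{ATC} playing the role of Lemma~\ref{ATQ}. Fix $C$ to be the constant from Lemma~\ref{ATC}, and let $K/\mathbb{Q}$ be a tame Galois extension realizing $G$, with $n$ shrunk so that it equals the product of the primes ramifying in $K$. Because $[G:H]=3$ is prime, $H$ is a maximal subgroup of $G$, so any generating set of $H$ together with any single element of $G\setminus H$ generates $G$; hence $d(G)\leq d(H)+1$.

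By nilpotency, I would write $H=P\times S$ with $P$ a Sylow $p$-subgroup realizing $d(P)=d(H)$ and $S$ the product of the other Sylow subgroups. Setting
\[ F=K^H,\qquad E=K^S,\qquad L=K^{\Phi(P)}, \]
one has $[F:\mathbb{Q}]=3$, $\operatorname{Gal}(E/F)\cong P$, and, by the Burnside basis theorem, $\operatorname{Gal}(L/F)\cong P/\Phi(P)$ is elementary abelian. Since $L/F$ is abelian and tamely ramified and $L/\mathbb{Q}$ is unramified outside primes dividing $n$ and $\infty$, applying Lemma~\ref{ATC} to $L$ with its cubic sub-extension $F$ gives $d(P/\Phi(P))+1\leq\log(n)+C$. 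Chaining this with $d(P)=d(P/\Phi(P))$ yields
\[ d(G)\leq d(H)+1=d(P)+1=d(P/\Phi(P))+1\leq\log(n)+C, \]
the desired bound.

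The one genuine obstacle, absent in the index-$2$ case, is that $H$ need not be normal in $G$ when $[G:H]=3$: the normal core of $H$ can have index $6$ with quotient $S_3$, in which case neither $P$ nor $\Phi(P)$ need be normal in $G$, and $L/\mathbb{Q}$ need not be Galois. This is a concern because Lemma~\ref{ATC} is stated for a Galois extension of $\mathbb{Q}$. However, inspection of the proof of Lemma~\ref{ATC} shows that Galoisness of $L/\mathbb{Q}$ is never actually used — the ray-class-group estimate on $d(\operatorname{Gal}(L/F))$ only requires that $L/F$ be abelian and tamely ramified and that the finite primes ramifying in $L/\mathbb{Q}$ all divide $n$. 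With that observation, the lemma applies to our $L$ regardless of whether it happens to be Galois over $\mathbb{Q}$, and the argument above goes through.
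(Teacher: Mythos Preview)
Your argument is the paper's: the paper's own proof is one line, instructing the reader to repeat Theorem~\ref{NI2} verbatim with Lemma~\ref{ATC} in place of Lemma~\ref{ATQ} and $[G:H]=3$ in place of $2$. You actually go a step further than the paper by flagging that Lemma~\ref{ATC} is stated only for \emph{Galois} extensions of $\mathbb{Q}$, whereas an index-$3$ subgroup $H$ need not be normal; your resolution---that the ray-class-group argument proving Lemma~\ref{ATC} never uses Galoisness over $\mathbb{Q}$---is correct and fills a gap the paper leaves implicit.

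One small slip: with $H=P\times S$ and $\Phi(P)\leq P\leq H$, the fixed field $K^{\Phi(P)}$ has $\operatorname{Gal}(K^{\Phi(P)}/F)\cong H/\Phi(P)\cong (P/\Phi(P))\times S$, not $P/\Phi(P)$. You want $L=K^{\Phi(P)\cdot S}$, i.e.\ the fixed field of $\Phi(P)$ inside $E$; this is what the paper constructs, and with that correction your chain of equalities stands.
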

\begin{proof}
	The proof is identical to Theorem \ref{NI2}, except replace Lemma \ref{ATQ} with Lemma \ref{ATC} and let $[G:H] = 3$ instead of $2$.
\end{proof}

\subsection{Wild Ramification}
\begin{remark}
	Theorem \ref{NI2} and Theorem \ref{NI3} still hold if we expand our attention to extensions of $\mathbb{Q}$ in which primes larger than or equal to $5$ are wildly ramified. Furthermore, if $3$ is unramified in the quadratic or cubic sub-extension of $\mathbb{Q}$, then $3$ may be wildly ramified in the nilpotent extension of the quadratic or cubic. Additionally, the above proofs still hold as written if $2$ or $3$ is wildly ramified in the quadratic or cubic sub-extension of $\mathbb{Q}$.
	
	The only place that tameness was used was in bounding the number of generators of the ray class group by bounding the number of generators of \[ \left(\mathcal{O}_K/\mathfrak{m}\right)^\ast \cong \left(\mathcal{O}_K/\mathfrak{m}_0\right)^\ast \times \left( \mathbb{Z} /2\mathbb{Z} \right)^{\lvert \mathfrak{m}_\infty \rvert}  \cong \prod_{\mathfrak{P} | \mathfrak{m}_0} \left(\mathcal{O}_K/\mathfrak{P}\right)^\ast  \times \left( \mathbb{Z} /2\mathbb{Z} \right)^{\lvert \mathfrak{m}_\infty \rvert}   \] in the proofs of Lemma \ref{ATQ} and Lemma \ref{ATC}. If instead we no longer consider only tame moduli for primes lying above integral primes larger than $3$, or lying above $3$ when $3$ is unramified in the quadratic or cubic, we now get \[ \left(\mathcal{O}_K/\mathfrak{m}\right)^\ast \cong \left(\mathcal{O}_K/\mathfrak{m}_0\right)^\ast \times \left( \mathbb{Z} /2\mathbb{Z} \right)^{\lvert \mathfrak{m}_\infty \rvert}  \cong \prod_{\mathfrak{P} | \mathfrak{m}_0} \left(\mathcal{O}_K/\mathfrak{P}^{k_\mathfrak{P}}\right)^\ast  \times \left( \mathbb{Z} /2\mathbb{Z} \right)^{\lvert \mathfrak{m}_\infty \rvert}   \]
	where $k_\mathfrak{P}$ can be larger than $1$ if it lies over an integral prime larger than $3$ or above $3$ when $3$ is unramified in the quadratic or cubic. By Corollary 4.2.11 in \cite{HC}, since $p \geq \min \{e+2, {k_\mathfrak{P}}\}$ by assumption, we get that 
	\[ \left(\mathcal{O}_K/\mathfrak{P}^{k_\mathfrak{P}}\right)^\ast \cong \left(\mathbb{Z}/(p^f-1)\mathbb{Z}\right) \times \left(\mathbb{Z}/p^q\mathbb{Z}\right)^{(r+1)f} \times \left(\mathbb{Z}/p^{q-1}\mathbb{Z}\right)^{(e-r-1)f}	\]
	where $k_\mathfrak{P} + e -2 = eq+r, 0 \leq r < e$. Note for a quadratic extension that $(r+1)f \leq ef \leq 2,$ and$ (e-r-1)f \leq (e-1)f \leq ef \leq 2$, and for a cubic extension that $(r+1)f \leq ef \leq 3,$ and$ (e-r-1)f \leq (e-1)f \leq ef \leq 3$. So, $\left(\mathcal{O}_K/\mathfrak{P}^{k_\mathfrak{P}}\right)^\ast$ is a product of at most $5$ cyclic groups in the quadratic case, and $7$ cyclic groups in the cubic case. If we still let $m$ be the product of the integral primes lying under those dividing the modulus, adjusting the proof of Lemma \ref{ATQ} for the current situation, we now have 
	\[	d\left(\operatorname{Gal}\left(K/F\right)\right) + 1 \leq 5\cdot \left( 2\cdot \omega(m) \right) + 2 + \log_2(h) +1\]
	instead of
	\[		d\left(\operatorname{Gal}\left(K/F\right)\right) + 1 \leq 2\cdot \omega(m) + 2 + \log_2(h) +1\]
	If we let $C_1 = 10 \cdot \pi(3^{100}) + 2$ instead of $2 \cdot \pi(3^{20}) + 2$, we get 
	\[ 10 \cdot \omega(m) + 2 \leq C_1 + .1 \cdot \log(m) \]
	and the rest of the proof is the same. Adjusting the proof of Lemma \ref{ATC} for the current situation, we get that $7\cdot \left( 3 \cdot \omega(m) \right) + 3 + d\left(Cl(F)\right) + 1$ is an upper bound for $d\left(\operatorname{Gal}(K/F)\right) + 1$. Now let $C_1 = 21 \cdot \pi(3^{2100})$ instead of $3 \cdot \pi(3^{300})$ and the rest of the proof is the same. The proofs of Theorem \ref{NI2} and Theorem \ref{NI3} still work even in this new situation.
\end{remark}

\section{Consequences and Examples}
\begin{proposition} \label{HCtopfingen}
	If Harbater's conjecture holds, then for all $n \in \mathbb{N}, \pi_1^t(U_n)$ is topologically finitely generated.
\end{proposition}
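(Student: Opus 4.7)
The plan is to invoke the standard characterization of topological finite generation for profinite groups: a profinite group $\Gamma$ is topologically generated by $d$ elements if and only if every finite continuous quotient of $\Gamma$ can be generated by $d$ elements. Granted this, Harbater's conjecture immediately supplies a uniform bound, since the finite continuous quotients of $\pi_1^t(U_n)$ are by definition the members of $\pi_A^t(U_n)$, and the conjecture gives $d(G) \leq \log(n) + C$ for each such $G$, with $C$ independent of everything.

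The first step is to fix $n$, take $C$ from Conjecture \ref{HConj}, and set $d = \lfloor \log(n) + C \rfloor + 1$. The second step is to justify the compactness claim directly rather than cite it. For each open normal subgroup $N \trianglelefteq \pi_1^t(U_n)$, I would define $X_N \subseteq \pi_1^t(U_n)^d$ to be the set of $d$-tuples whose image in $\pi_1^t(U_n)/N$ generates that finite quotient. Then $X_N$ is closed in $\pi_1^t(U_n)^d$, because it is the preimage of a subset of a finite discrete set under the continuous projection $\pi_1^t(U_n)^d \twoheadrightarrow (\pi_1^t(U_n)/N)^d$, and $X_N$ is nonempty because $\pi_1^t(U_n)/N \in \pi_A^t(U_n)$ and Harbater's bound gives $d(\pi_1^t(U_n)/N) \leq d$.

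The last step is to verify the finite intersection property and apply compactness. For any two open normals $N_1, N_2$, the subgroup $N_1 \cap N_2$ is again open and normal, and a $d$-tuple whose image in $\pi_1^t(U_n)/(N_1 \cap N_2)$ generates will also generate each further quotient $\pi_1^t(U_n)/N_i$; hence $X_{N_1 \cap N_2} \subseteq X_{N_1} \cap X_{N_2}$, and $X_{N_1 \cap N_2}$ is nonempty by the previous step. Since $\pi_1^t(U_n)^d$ is compact as a product of profinite groups, the intersection $\bigcap_N X_N$ is nonempty, and any element of this intersection is a topologically generating $d$-tuple for $\pi_1^t(U_n)$. There is no real obstacle here: once the uniform bound from Harbater's conjecture is in hand, the entire content is the well-known inverse-limit/compactness argument, with the only essential point being that the bound depends on $n$ alone and not on the chosen quotient.
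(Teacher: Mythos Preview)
Your proof is correct and follows the same approach as the paper. The paper's proof simply observes that Harbater's conjecture gives a uniform bound $C + \log(n)$ on the number of generators of every finite quotient of $\pi_1^t(U_n)$ and then invokes Lemma 2.5.3 of \cite{LRPZ}; your compactness argument on the closed sets $X_N \subseteq \pi_1^t(U_n)^d$ is precisely the standard proof of that lemma, so you have unpacked the citation rather than taken a different route.
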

\begin{proof}
	By assumption of Harbater's conjecture, every group in the inverse system whose limit is $\pi_1^t(U_n)$ is generated by at most $C + \log(n)$ elements. Now apply Lemma 2.5.3 in \cite{LRPZ}.
\end{proof}

\begin{remark}
	In light of Proposition \ref{HCtopfingen}, the veracity of Conjecture \ref{HConj} in the cases described in Proposition \ref{HCTN}, Theorem \ref{NI2}, and Theorem \ref{NI3} provides evidence that for all $n \in \mathbb{N}, \pi_1^t(U_n)$ is topologically finitely generated.
\end{remark}

\begin{proposition}
	If Harbater's conjecture holds with a constant $C$, then for any tame extension $K/\mathbb{Q}$, if $m$ is the product of the ramified primes then the class group of $K$ has a generating set of size at most $1 + [K:\mathbb{Q}]\left(\log(m) + C -1 \right)$.
\end{proposition}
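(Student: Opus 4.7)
The plan is to realize $\operatorname{Cl}(K)$ as a quotient of the Galois group of a carefully chosen Galois extension of $\mathbb{Q}$ to which Conjecture \ref{HConj} applies, and then pull the bound down to $K$ via Schreier's index formula.

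To begin, I would let $H_K$ denote the Hilbert class field of $K$, so that $H_K/K$ is an unramified abelian extension with $\operatorname{Gal}(H_K/K) \cong \operatorname{Cl}(K)$, and let $L$ be the Galois closure of $H_K$ over $\mathbb{Q}$. Set $G = \operatorname{Gal}(L/\mathbb{Q})$ and $H = \operatorname{Gal}(L/K)$, so that $[G:H] = [K:\mathbb{Q}]$. The first thing to verify is that $G \in \pi_A^t(U_m)$: since $K/\mathbb{Q}$ is tame and $H_K/K$ is unramified, the tower $H_K/\mathbb{Q}$ is tame and ramified only at primes dividing $m$, and taking the Galois closure preserves both tameness and the set of ramified primes because conjugate primes share the same ramification index. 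Conjecture \ref{HConj} then gives
\[ d(G) \leq \log(m) + C. \]

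Next, I would apply the Schreier index formula to the subgroup $H \leq G$ of finite index $[K:\mathbb{Q}]$, which yields
\[ d(H) \;\leq\; 1 + [G:H]\bigl(d(G) - 1\bigr) \;\leq\; 1 + [K:\mathbb{Q}]\bigl(\log(m) + C - 1\bigr). \]
Since $H_K$ is a subfield of $L$, $\operatorname{Cl}(K) \cong \operatorname{Gal}(H_K/K)$ arises as the quotient $H/\operatorname{Gal}(L/H_K)$, and is therefore a homomorphic image of $H$. Combining these gives
\[ d\bigl(\operatorname{Cl}(K)\bigr) \leq d(H) \leq 1 + [K:\mathbb{Q}]\bigl(\log(m) + C - 1\bigr), \]
as required.

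The only non-formal input is the assertion that passing from $H_K$ to its Galois closure over $\mathbb{Q}$ enlarges neither the ramification locus nor breaks tameness, which is the main step meriting attention; the rest is essentially bookkeeping. It is worth noting that the factor of $[K:\mathbb{Q}]$ in the conclusion is an artifact of the Schreier bound and reflects the worst case in which $K/\mathbb{Q}$ is far from Galois, so that the ascent to $L$ is expensive; when $K/\mathbb{Q}$ is itself Galois one has $L = H_K$ and the argument gives the sharper bound $d(\operatorname{Cl}(K)) \leq \log(m) + C$.
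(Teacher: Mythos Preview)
Your proof is correct and follows essentially the same route as the paper: pass to the Galois closure $L$ of $H_K$ over $\mathbb{Q}$, apply Conjecture~\ref{HConj} to $\operatorname{Gal}(L/\mathbb{Q})$, and descend via the Nielsen--Schreier/Schreier index bound. You are in fact more careful than the paper in explicitly verifying that $L/\mathbb{Q}$ remains tame and ramified only at primes dividing $m$.

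One caveat about your closing remark: when $K/\mathbb{Q}$ is Galois and $L = H_K$, the group $\operatorname{Cl}(K) \cong \operatorname{Gal}(H_K/K)$ is a \emph{subgroup} of $G$, not a quotient, so the bound $d(G) \le \log(m) + C$ does not directly yield $d(\operatorname{Cl}(K)) \le \log(m) + C$; you would still need Schreier, and the factor $[K:\mathbb{Q}]$ does not disappear.
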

\begin{proof}
	Any group $G$ that can be generated by $d$ elements is a quotient of the free group on $d$ elements, $F_d$. So, $G \cong F_d/N$ for some $N \trianglelefteq F_d$. By the correspondence theorem, any subgroup of $G$ is of the form $H/N$ for some $N \leq H \leq F_d$.  Also, $[F_d:H] = [G:H/N]$. Let this index be $n$. By the Nielsen-Schreier theorem, we know that $H$ is free of rank $1+n(d-1)$. So, $H/N$ can be generated by $1+n(d-1)$ elements.
	
	Let $K$ be a tame extension of $\mathbb{Q}$ and let $m$ be the product of the ramified primes in $K/\mathbb{Q}$. Let $n = [K:\mathbb{Q}]$, $H_K$ be the hilbert class field of $K$, and $M$ be the Galois closure of $H_K$ over $\mathbb{Q}$. 
	$$\begin{tikzpicture}
	\matrix(m)[matrix of math nodes,
	row sep=3em, column sep=4.5em,
	text height=1.5ex, text depth=0.25ex]
	{ M	  \\
		H_K \\
		K	  \\
		\mathbb{Q} \\ };
	\path[-,font=\scriptsize]
	(m-1-1) edge node[auto] {} (m-2-1)
	(m-2-1) edge node[auto] {} (m-3-1)
	(m-3-1) edge node[auto] {n} (m-4-1)
	
	;
	\end{tikzpicture}.$$
	
	By assumption of Harbater's conjecture, we get that $\operatorname{Gal}(M/\mathbb{Q})$ requires at most $\log(m) + C$ generators. Since $\operatorname{Gal}(M/K)$ is an index $n$ subgroup, it requires at most $1 + n(\log(m) + C -1)$ generators. Since $\operatorname{Gal}(H_K/K)$, which is isomorphic to the class group of $K$, is a quotient of $\operatorname{Gal}(M/K)$, it also requires at most $1 + n(\log(m) + C -1) = 1 + [K:\mathbb{Q}](\log(m) + C -1)$ generators.
\end{proof}

\begin{example}
	As a consequence of Theorem \ref{NI2} and Theorem \ref{NI3}, if $p$ is a prime number and $n$ is a square-free natural number, then there are only finitely many groups of the form $\left(\mathbb{Z}/p\mathbb{Z}\right)^i \rtimes \mathbb{Z}/2\mathbb{Z}$ or $\left(\mathbb{Z}/p\mathbb{Z}\right)^i \rtimes \mathbb{Z}/3\mathbb{Z}$ in $\pi_A^t(U_n)$. If we also assume that $n$ is coprime to $2$ and $3$, then the same is true for $\pi_A(U_n)$.
	
	The Boston-Markin conjecture in \cite{BM} states that every nontrivial finite group $G$ can be realized as a Galois group over $\mathbb{Q}$ with $\max \left\lbrace 1, d\left(G^{ab}\right) \right\rbrace$ many ramified primes. The above semidirect products are of interest because they are potential cases in which Harbater's conjecture could have clashed with the Boston-Markin conjecture. Theorem \ref{NI2} applies to all generalized dihedral groups, of which elementary abelian $p$-groups semidirect $\mathbb{Z}/2\mathbb{Z}$ by inversion for $p \geq 3$ are a special case. These groups have $\mathbb{Z}/2\mathbb{Z}$ abelianization, so the Boston-Markin conjecture predicts there should be extensions ramified at a single prime that realize each of them as Galois groups over $\mathbb{Q}$. The groups themselves also require as many generators as the rank of the elementary abelian $p$-group, so Harbater's conjecture suggests that the product of the primes in the extensions realizing them would have to be quite large.
\end{example}

\begin{remark}
	The arguments of Section 3.2 actually show that for the corresponding extensions, $d(G) < C + .97 \cdot \log(n)$ where $n$ is the product of the ramified primes. This means that when $n$ is large, $d(G) < \log(n)$ without the aid of the constant. Since each prime can only divide the discriminant a bounded number of times for quadratic and cubic extensions, this means that if the discriminant is large, then $n$ is also large. Since there are only finitely many number fields of bounded discriminant, $n$ is small for only finitely many such extensions and so the constant is necessary for only finitely many such extensions. This provides evidence that the constant should be small, and perhaps even $0$.
\end{remark}

\section*{Acknowledgements}
I would like to thank David Harbater for many helpful discussions and suggestions about the content of this paper.

\bibliographystyle{abbrv}
\bibliography{bib}

{
	\footnotesize
	
	\textsc{Department of Mathematics, University of Pennsylvania, Philadelphia, PA 19104-6395} 
	
	\textit{E-mail address}: \texttt{pollakb@sas.upenn.edu}
}

\end{document}